\documentclass[reqno,oneside,11pt,english]{amsart}

\usepackage{amsmath,amsfonts,amssymb,amsthm,epsfig}
\usepackage{comment}

\usepackage{color}
\usepackage{cite}
\usepackage[pagebackref,colorlinks,citecolor=blue,linkcolor=blue]{hyperref}
\usepackage{graphicx}
\usepackage{titletoc,epsf}
\usepackage{latexsym,amsxtra,bbm}
\allowdisplaybreaks
\usepackage{ifthen}
\usepackage{float}

\newtheorem{thm}{Theorem}[section]
\newtheorem{lem}[thm]{Lemma}

\newtheorem{cor}[thm]{Corollary}
\newtheorem{prop}[thm]{Proposition}

\theoremstyle{definition}

\numberwithin{equation}{section}

{\qed\bigskip}

\newcommand{\R}{\mathbb R}
\newcommand{\Sn}{\mathbb S^{n-1}}
\newcommand{\Hn}{\mathcal H^{n-1}}

\newcommand{\dist}{\operatorname{dist}}
\newcommand{\supp}{\operatorname{supp}}

\newcommand{\E}{\mathcal E}
\newcommand{\A}{\mathcal A}

\usepackage{fancyhdr}
\newcommand{\papername}{\scriptsize {JIANG LI AND ZHUANG WANG}}
\newcommand{\papertitle}{\tiny {BOURGAIN--BREZIS--MIRONESCU FORMULA FOR BV FUNCTIONS ON ARBITRARY OPEN SETS AND APPLICATIONS}} 
\fancypagestyle{plain}{%
  \fancyhf{}
  \fancyhead[C]{\ifthenelse{\isodd{\value{page}}}{\papername}{\papertitle}}
  \fancyfoot[C]{\thepage}
  
}

\begin{document}

\title[]{Bourgain--Brezis--Mironescu formula for BV functions on arbitrary open sets and applications }
\author{}
\date{}
\subjclass[2020]{46E35, 26A45.}
\keywords{Bourgain--Brezis--Mironescu formula, functions of bounded variation, nonlocal functionals, $\Gamma$-convergence, compactness.}
\author{Jiang Li,  Zhuang Wang$^{*}$}


	\address[Jiang Li]{Key Laboratory of Computing and Stochastic Mathematics (Ministry of Education), School of Mathematics and Statistics, Hunan Normal University, Changsha, Hunan 410081, P.R. China.}
	\email{jiang\_li@hunnu.edu.cn; jiangli.math@qq.com}

	\address[Zhuang Wang]{Key Laboratory of Computing and Stochastic Mathematics (Ministry of Education), School of Mathematics and Statistics, Hunan Normal University, Changsha, Hunan 410081, P.R. China.}
	\email{zwang@hunnu.edu.cn}

	\date{\today}
  \thanks{$^*$Corresponding author: Zhuang Wang}
\begin{abstract}
In this paper, we establish a Bourgain--Brezis--Mironescu formula for BV functions on arbitrary open sets for a general class of radial mollifiers. Building on this formula, we identify the \(\Gamma\)-limit of the improved nonlocal energies  with respect to \(L^1\)-convergence as a constant multiple of the total variation. As a consequence, we obtain a characterization of BV functions in terms of the finiteness of the corresponding nonlocal energies. We also establish  local \(L^1\)-compactness  on arbitrary open sets and global \(L^1\)-compactness  on bounded open sets under a natural uniform integrability assumption. In particular, except for the global compactness result, all these results hold on arbitrary open sets, with no assumptions on boundedness, connectedness, or boundary regularity. Counterexamples show that for global compactness, both the boundedness of the domain and the uniform integrability assumption are essential. The proofs rely primarily on  one-dimensional
restrictions of BV functions and a concentration argument for radial mollifiers.
\end{abstract}
\maketitle
\tableofcontents

\section{Introduction}
\subsection{Bourgain--Brezis--Mironescu formula}
Let $\R^n$ be the $n$-dimensional Euclidean space with $n\ge2$.
The Bourgain--Brezis--Mironescu (BBM) formula provides nonlocal characterizations of
first-order Sobolev spaces and of functions of bounded variation on smooth domains. At the endpoint
$p=1$, D\'avila \cite{Davila} resolved an open problem posed by
Bourgain--Brezis--Mironescu \cite{BBM1} by establishing the corresponding limit
for BV functions.

More precisely, let $\{\rho_\lambda\}_{\lambda>0}\subset L^1(\R^n)$ be a family
of nonnegative radial functions satisfying
\begin{equation}\label{eq:kernel-mass}
	\int_{\R^n}\rho_\lambda(h)\,dh=1
	\qquad\text{for every }\lambda>0,
\end{equation}
and
\begin{equation}\label{eq:kernel-tail}
	\lim_{\lambda\to0^+}\int_{|h|>R}\rho_\lambda(h)\,dh=0
	\qquad\text{for every }R>0.
\end{equation}
If $\Omega\subset\R^n$ is a bounded Lipschitz domain, then
D\'avila \cite[Theorem~1]{Davila} proved that
\begin{equation}\label{eq:classical-bbm}
	\lim_{\lambda\to0^+}
	\int_\Omega\int_\Omega
	\frac{|u(x)-u(y)|}{|x-y|}\rho_\lambda(x-y)\,dy\,dx
	=K_{1,n}|Du|(\Omega)
\end{equation}
holds for every $u\in BV(\Omega)$, where
\begin{equation}\label{eq:Kn}
	K_{1,n}
	:=\frac{1}{\Hn(\Sn)}\int_{\Sn}|\sigma\cdot e|\,d\Hn(\sigma)
\end{equation}
and $e\in\Sn$ is any fixed unit vector. By rotational invariance, the value of
$K_{1,n}$ is independent of the choice of $e$.

Following the celebrated works of Bourgain--Brezis--Mironescu
\cite{BBM1,BBM2}, numerous further characterizations of Sobolev spaces have been
obtained by means of BBM-type nonlocal quantities; see, for example,
\cite{BourgainNguyen,Nguyen2006,Nguyen2008,Ponce2004}. The BBM formula has also
been extended to Orlicz and generalized Orlicz spaces
\cite{AlbericoEtAl2020,FernandezBonderSalort2019,FerreiraHastoRibeiro2020,YangYangYuan2019},
variable exponent spaces \cite{HastoRibeiro2017,FerrariSquassina2022}, magnetic
Sobolev and BV spaces \cite{SquassinaVolzone2016,PinamontiSquassinaVecchi2019},
anisotropic settings \cite{Ludwig2014,NguyenSquassina2019}, Riemannian manifolds
\cite{KreumlMordhorst2019}, metric measure spaces
\cite{DiMarinoSquassina2019,Gorny2022,LahtiPinamontiZhou2024}, and ball Banach
function spaces \cite{DaiEtAl2023,ZhuYangYuan2023}. Related limiting problems for
Besov spaces were studied in \cite{KolyadaLerner2005,Triebel2011}. Further
variants of the BBM limit can be found in
\cite{BraidesSolci2025,BrezisNguyen2016,
	BrezisSeegerVanSchaftingenYung2022,ClarosPerez2026,DominguezMilman2023,
	GennaioliStefani2025,LeoniSpector2011,LeoniSpector2014}.

A complementary direction is to relax the geometric assumptions on the domain.
Bal--Mohanta--Roy \cite{BalMohantaRoy2020} proved that the classical BBM formula
remains valid for $W^{1,p}$ functions on $W^{1,p}$-extension domains.
Drelichman--Dur\'an \cite{DrelichmanDuran} subsequently obtained  the BBM formula on arbitrary bounded domains by considering the improved fractional Sobolev energies. More precisely, if
$\Omega\subset\R^n$ is a bounded domain, $1<p<\infty$, $0<\tau<1$, and
$u\in W^{1,p}(\Omega)$, then
\[
\lim_{s\to1^-}(1-s)
\int_\Omega
\int_{B(x,\tau\dist(x,\partial\Omega))}
\frac{|u(x)-u(y)|^p}{|x-y|^{n+sp}}\,dy\,dx
=K_{p,n}\int_\Omega |\nabla u(x)|^p\,dx,
\]
where $K_{p,n}>0$ depends only on $p$ and $n$.

Mohanta \cite[Theorem 1]{Mohanta} later removed the boundedness assumption and treated the
more general $W_q^{s,p}$ framework, thereby answering the question raised in
\cite{BrazkeSchikorraYung2023} concerning the asymptotic behavior of the
corresponding seminorms. His result includes the endpoint $p=q=1$. At this
endpoint, however, the exact limit formula is proved under the assumption
$u\in W^{1,1}(\Omega)$, and it does not yield the corresponding formula for 
$u\in BV(\Omega)$. Moreover, the improved fractional Sobolev energies of
Drelichman--Dur\'an and Mohanta involve the specific mollifiers and do not treat the general radial mollifiers satisfying
\eqref{eq:kernel-mass} and \eqref{eq:kernel-tail}.

The first purpose of this paper is to address both of these points. We establish a
 BBM formula for BV functions on arbitrary open sets, using 
general radial mollifiers satisfying \eqref{eq:kernel-mass} and
\eqref{eq:kernel-tail}. Let  $\Omega\subset\R^n$ be an open set.
For $0<\tau<1$ and $u\in L^1(\Omega)$, define
\begin{equation}\label{eq:energy}
	\E_{\lambda,\tau}^{\rho}(u,\Omega)
	:=\int_\Omega\int_{B(x,\tau\dist(x,\partial\Omega))}
	\frac{|u(x)-u(y)|}{|x-y|}\rho_\lambda(x-y)\,dy\,dx.
\end{equation}

Our first main result is the following.

\begin{thm}\label{thm:main}
	Let $\Omega\subset\R^n$ be an open set, let $0<\tau<1$, and let 
	$\{\rho_\lambda\}_{\lambda>0}$ be a family of nonnegative radial functions satisfying \eqref{eq:kernel-mass} and
	\eqref{eq:kernel-tail}. Then, for every $u\in BV(\Omega)$,
	\begin{equation}\label{eq:main}
		\lim_{\lambda\to0^+}\E_{\lambda,\tau}^{\rho}(u,\Omega)
		=K_{1,n}|Du|(\Omega).
	\end{equation}
The constant $K_{1,n}$ is given by \eqref{eq:Kn}. 
\end{thm}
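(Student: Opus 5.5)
We will prove \eqref{eq:main} by establishing a lower bound for the $\liminf$ and an upper bound for the $\limsup$ separately. In both we write $\delta(x):=\dist(x,\partial\Omega)$ and change variables $y=x+h$. Since $\rho_\lambda$ is radial, we also use polar coordinates $h=r\sigma$ with $\sigma\in\Sn$, which gives
\[
\E_{\lambda,\tau}^{\rho}(u,\Omega)=\int_{\R^n}\frac{\rho_\lambda(h)}{|h|}\int_{\{x\in\Omega:\,|h|<\tau\delta(x)\}}|u(x+h)-u(x)|\,dx\,dh .
\]
The key observation is that the constraint $|h|<\tau\delta(x)$ makes the whole segment $[x,x+h]$ lie in $\{z\in\Omega:\delta(z)>(1-\tau)\delta(x)\}\subset\Omega$. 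So no extension of $u$ across $\partial\Omega$ is ever needed.

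\textbf{Upper bound.} Fix $h=r\sigma$. We restrict $u$ to lines parallel to $\sigma$: for $\mathcal H^{n-1}$-a.e.\ line $x'+t\sigma$, the restriction lies in $BV$ of each interval contained in $\Omega$. For such lines the one-dimensional estimate gives $|u(x+h)-u(x)|\le |D(u_{x'})|([t,t+r])$, applied to good representatives of the restriction. Integrating over the admissible $x$ and using Fubini together with the slicing formula $\int |D u_{x'}|\,dx'=|D u\cdot\sigma|$, we obtain
\[
\int_{\{|h|<\tau\delta(x)\}}|u(x+h)-u(x)|\,dx\le r\,|Du\cdot\sigma|(\Omega)=r\int_\Omega|\nu_u\cdot\sigma|\,d|Du| ,
\]
where $\nu_u$ is the polar of $Du$. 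We then integrate against $\rho_\lambda(h)/|h|$ and average over $\sigma$ using radiality. By Fubini the angular average of $|\nu_u\cdot\sigma|$ equals $K_{1,n}$ for each fixed $\nu_u$, and $\int\rho_\lambda=1$. This yields $\E_{\lambda,\tau}^\rho(u,\Omega)\le K_{1,n}|Du|(\Omega)$ for every $\lambda$, without even needing \eqref{eq:kernel-tail}. One technical point must be checked: the segment is always inside $\Omega$, so the 1D variation is taken over an interval inside the line's intersection with $\Omega$, and each point of that interval is counted with total weight at most $r$.

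\textbf{Lower bound.} We localize. Fix a compact $K\subset\Omega$, let $d=\dist(K,\partial\Omega)>0$ and $U=\{\delta>d/2\}$. For $x\in U$ and $|h|<\tau d/2$ the constraint holds automatically. Hence
\[
\E_{\lambda,\tau}^\rho(u,\Omega)\ge \int_{|h|<\tau d/2}\frac{\rho_\lambda(h)}{|h|}\int_U|u(x+h)-u(x)|\,dx\,dh .
\]
Now we mollify: $u_\varepsilon=u*\eta_\varepsilon$ on a slightly smaller open set $V\Subset U$ with $K\subset V$. Convexity (Jensen) gives $\int_V|u_\varepsilon(x+h)-u_\varepsilon(x)|\,dx\le\int_U|u(x+h)-u(x)|\,dx$ for $\varepsilon$ and $|h|$ small. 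For the smooth $u_\varepsilon$ we write the difference quotient as $\int_0^1\nabla u_\varepsilon(x+th)\cdot\sigma\,dt$. By uniform continuity on compacts, $\int_V|u_\varepsilon(x+h)-u_\varepsilon(x)|/|h|\,dx\to\int_V|\nabla u_\varepsilon\cdot\sigma|\,dx$ uniformly in $\sigma$ as $|h|\to0$. Combining this with \eqref{eq:kernel-tail}, which concentrates the mass of $\rho_\lambda$ in $\{|h|<\tau d/2\}$ as $\lambda\to0$, we get
\[
\liminf_{\lambda\to0}\E_{\lambda,\tau}^\rho(u,\Omega)\ge \frac{1}{\Hn(\Sn)}\int_{\Sn}\int_V|\nabla u_\varepsilon\cdot\sigma|\,dx\,d\Hn(\sigma)=K_{1,n}\int_V|\nabla u_\varepsilon|\,dx .
\]
Letting $\varepsilon\to0$ and using lower semicontinuity gives $\ge K_{1,n}|Du|(V)\ge K_{1,n}|Du|(\mathrm{int}\,K)$. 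Exhausting $\Omega$ by compacts then yields $K_{1,n}|Du|(\Omega)$.

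The main obstacle is the upper bound. The rigorous one-dimensional slicing inequality has to be set up on the sets $\{x:|h|<\tau\delta(x)\}$, which are not products along lines. Moreover, $\Omega$ is arbitrary, so lines intersect it in countably many intervals. We would handle this by working line by line: on each component interval the admissible $t$ form a subset for which $[t,t+r]$ stays inside the interval. We then apply $\int_I|v(t+r)-v(t)|\,dt\le r|Dv|(I)$ for $v\in BV(I)$, which is proved via the fundamental theorem of calculus for a good representative of $v$ and a Fubini swap.
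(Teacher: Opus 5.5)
Your proof is correct, and its upper half matches the paper's. The line-by-line estimate is Lemma~\ref{lem:1d} plus Proposition~\ref{prop:dir-upper} (that is, $G_\sigma(r)\le r|D_\sigma u|(\Omega)$), and averaging against the radial kernel is Proposition~\ref{prop:Ele}.

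Your lower half takes a genuinely different route. The paper argues direction by direction:
\begin{itemize}
\item it shows $\lim_{r\to0^+}G_\sigma(r)/r=|D_\sigma u|(\Omega)$ for each $\sigma$ (Proposition~\ref{prop:dir-limit}), where the $\liminf$ comes from testing the difference quotients against $\varphi\in C_c^1(\Omega)$ and using duality for the signed measure $D_\sigma u$;
\item it feeds this into the concentration Lemma~\ref{lem:abelian} on $(0,R)$ and applies dominated convergence in $\sigma$;
\item it treats $r>R$ separately, using $G_\sigma(r)\le 2\|u\|_{L^1(\Omega)}$ and \eqref{eq:kernel-tail}.
\end{itemize}
You instead exploit that the upper bound holds for every $\lambda$, so no tail estimate is needed and \eqref{eq:kernel-tail} is used only for the $\liminf$. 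You obtain that $\liminf$ by localizing to $V\Subset\Omega$, mollifying with Jensen, and using a uniform-in-$\sigma$ Taylor estimate for $u_\varepsilon$ together with radiality to factor out $\int_{B(0,r_0)}\rho_\lambda$. Lower semicontinuity of the total variation and an exhaustion then finish the argument. This is essentially the paper's Lemma~\ref{lem:mollify} (the $\Gamma$-$\liminf$ inequality of Section~\ref{sec:gamma}) applied to the constant sequence $u_j=u$, with Lemma~\ref{lem:smoothtrunc} replaced by your direct estimate.

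Each route buys something different. Yours is shorter. Because your lower bound uses only $u\in L^1(\Omega)$, it also gives at once the implication $\liminf_{\lambda\to0^+}\E_{\lambda,\tau}^{\rho}(u,\Omega)<\infty\Rightarrow u\in BV(\Omega)$ of Corollary~\ref{cor:characterization}. The paper's route avoids smooth approximation entirely and stays inside the slicing framework. It also yields the finer directional information $G_\sigma(r)/r\to|D_\sigma u|(\Omega)$ for every $\sigma$, together with the convergence of the truncated energies in Proposition~\ref{prop:smallscale}.
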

The use of the improved fractional Sobolev energy in \eqref{eq:energy} is in general essential. The energy in \eqref{eq:classical-bbm} does not yield a BBM formula on arbitrary open sets. The corresponding slit-domain counterexample is presented  in Section~\ref{subsec:slit-counterexample}. 

As a direct consequence of Theorem~\ref{thm:main}, by using specific mollifiers, we obtain the following BBM formula for BV functions. For
$u\in W^{1,1}(\Omega)$, it coincides with the endpoint case
$p=q=1$ of \cite[Theorem~1]{Mohanta}.

\begin{cor}\label{cor:main}
	Let $\Omega\subset\R^n$ be an open set and let $0<\tau<1$. Then, for every $u\in BV(\Omega)$,
	\begin{equation}\label{eq:main-1}
			\lim_{s\to1^-}(1-s)
		\int_\Omega
		\int_{B(x,\tau\dist(x,\partial\Omega))}
		\frac{|u(x)-u(y)|}{|x-y|^{n+s}}\,dy\,dx
		=K_{n}|Du|(\Omega),
	\end{equation}
	where $K_n=\Hn(\Sn)\,K_{1,n}$.
\end{cor}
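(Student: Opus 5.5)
Corollary~\ref{cor:main} will follow from Theorem~\ref{thm:main} by choosing the family of mollifiers adapted to the fractional kernel. Because the energy only sees pairs with $|x-y|<\tau\dist(x,\partial\Omega)$, and $\Omega$ may be unbounded, I cannot use the kernel $(1-s)|h|^{-n-s+1}$ directly: it is not integrable at infinity. I will therefore truncate it at radius $1$ and show that the discarded part is negligible.

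For $0<s<1$ I set $\lambda=1-s$ and define
\[
\rho_\lambda(h):=\frac{1-s}{\Hn(\Sn)}\,|h|^{1-n-s}\,\mathbf 1_{\{|h|<1\}}(h).
\]
Polar coordinates give
\[
\int_{\R^n}\rho_\lambda\,dh=(1-s)\int_0^1 r^{-s}\,dr=1,
\]
so \eqref{eq:kernel-mass} holds. For $0<R<1$,
\[
\int_{|h|>R}\rho_\lambda\,dh=1-R^{1-s}\to0 \quad\text{as } s\to1^-,
\]
and for $R\ge1$ this tail is zero, so \eqref{eq:kernel-tail} holds. Each $\rho_\lambda$ is nonnegative and radial. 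Theorem~\ref{thm:main} then yields
\[
\frac{1-s}{\Hn(\Sn)}\int_\Omega\int_{B(x,\tau d(x))\cap B(x,1)}\frac{|u(x)-u(y)|}{|x-y|^{n+s}}\,dy\,dx\to K_{1,n}|Du|(\Omega),
\]
where $d(x)=\dist(x,\partial\Omega)$. Multiplying by $\Hn(\Sn)$ gives the limit $K_n|Du|(\Omega)$ with $K_n=\Hn(\Sn)K_{1,n}$.

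It remains to show that the region $|x-y|\ge1$ contributes nothing in the limit. There I use $|u(x)-u(y)|\le|u(x)|+|u(y)|$. The term with $|u(x)|$ is bounded by
\[
(1-s)\int_\Omega|u(x)|\int_{|h|\ge1}|h|^{-n-s}\,dh\,dx=(1-s)\,\frac{\Hn(\Sn)}{s}\,\|u\|_{L^1(\Omega)}.
\]
For the term with $|u(y)|$, I note that $y\in B(x,\tau d(x))$ forces $x\in\Omega$, so Tonelli's theorem applies. Integrating first in $x$ over the region $|x-y|\ge1$ gives the same bound. The total is $O(1-s)\to0$ because $u\in L^1(\Omega)$.

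The only point requiring care is this tail step. It is what lets the argument work on unbounded $\Omega$, and it relies only on the integrability of $u$, not on any property of the domain. If $\Omega$ happens to be bounded, the truncation is unnecessary for large $s$ anyway, since then $\tau d(x)<1$ eventually up to scaling. The argument above covers the general case.
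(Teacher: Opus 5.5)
Your argument is correct and is essentially the paper's proof. The paper also realizes the truncated fractional kernel as a radial mollifier, $\rho_\lambda(h)=\frac{\lambda}{R^\lambda\Hn(\Sn)}|h|^{\lambda-n}\mathbf 1_{[0,R]}(|h|)$ with $\lambda=1-s$ (you take $R=1$), obtains the near-diagonal limit via Proposition~\ref{prop:smallscale} in polar form rather than Theorem~\ref{thm:main} directly, and disposes of the region $|x-y|\ge R$ with the same $L^1$ bound $2\Hn(\Sn)\frac{1-s}{s}R^{-s}\|u\|_{L^1(\Omega)}$.
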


\subsection{$\Gamma$-convergence}

The classical BBM formula is a pointwise statement. For each fixed $u$, the nonlocal energy converges to the corresponding
first-order energy \cite{BBM1,Davila}. However, for variational problems, this is not
sufficient, since minimizing sequences generally depend on the nonlocal
scale. The appropriate framework is therefore $\Gamma$-convergence. The main advantage of $\Gamma$-convergence is its stability for variational
problems. Combined with compactness, it allows one to pass from minimizers of
the nonlocal energies to minimizers of the limiting functional, see \cite{DalMaso,Braides}.

Therefore, it is the natural framework for the variational form of the BBM limit. The variational viewpoint for BBM-type energies was developed by Ponce
\cite{Ponce2004} and further studied by Nguyen, including  BV functions
\cite{Nguyen2007Gamma,Nguyen2011Gamma}. Related $\Gamma$-convergence results
for nonlocal perimeter functionals, together with compactness and stability
of minimizers, were obtained by Ambrosio--De Philippis--Martinazzi
\cite{AmbrosioDePhilippisMartinazzi2011}. More recently,
Gennaioli--Stefani \cite{GennaioliStefani2025} established sharp conditions
for pointwise and $\Gamma$-convergence, as well as compactness, for general
nonnegative kernels in the whole space $\mathbb R^n$.

Thus, we next turn to the variational behavior of the nonlocal energies on arbitrary open sets. For $\lambda>0$, define
$\mathcal F_\lambda:L^1(\Omega)\to[0,\infty]$ by
\[
 \mathcal F_\lambda(u):=\E_{\lambda,\tau}^{\rho}(u,\Omega),
\]
and define $\mathcal F:L^1(\Omega)\to[0,\infty]$ by
\begin{equation}\label{eq:Gamma-limit-functional}
 \mathcal F(u):=
 \begin{cases}
 K_{1,n}|Du|(\Omega),& u\in BV(\Omega),\\
 +\infty,&u\in L^1(\Omega)\setminus BV(\Omega).
 \end{cases}
\end{equation}

\begin{thm}\label{thm:gamma}
Let $\Omega\subset\R^n$ be an arbitrary open set, let $0<\tau<1$, and let
$\{\rho_\lambda\}_{\lambda>0}$ be a family of nonnegative radial functions satisfying \eqref{eq:kernel-mass} and
\eqref{eq:kernel-tail}. Then, for every sequence $\lambda_j\to0^+$ as $j\to\infty$, the
following statements hold.
\begin{enumerate}
\item[(i)] For every sequence $u_j\to u$ in $L^1(\Omega)$, then
\[
\mathcal F(u)
\le
\liminf_{j\to\infty}\mathcal F_{\lambda_j}(u_j).
\]

\item[(ii)] For every $u\in L^1(\Omega)$, there exists a sequence
$u_j\to u$ in $L^1(\Omega)$ such that
\[
\limsup_{j\to\infty}\mathcal F_{\lambda_j}(u_j)
\le
\mathcal F(u).
\]
\end{enumerate}
\end{thm}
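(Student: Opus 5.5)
The limsup inequality (ii) is immediate from Theorem~\ref{thm:main}. If $u\notin BV(\Omega)$ then $\mathcal F(u)=+\infty$, and the constant sequence $u_j=u$ works trivially. If $u\in BV(\Omega)$, we again take $u_j=u$; Theorem~\ref{thm:main} gives $\lim_j\mathcal F_{\lambda_j}(u)=K_{1,n}|Du|(\Omega)=\mathcal F(u)$. So the whole content lies in the liminf inequality (i).

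For (i) we may assume $L:=\liminf_j\mathcal F_{\lambda_j}(u_j)<\infty$ and, after passing to a subsequence, that the liminf is a limit.

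\emph{Localization.} Fix an open set $U\Subset\Omega$ and let $\delta=\dist(U,\partial\Omega)>0$. For $x\in U$ the inner ball $B(x,\tau\dist(x,\partial\Omega))$ contains $B(x,\tau\delta)$. Hence
\[
\mathcal F_{\lambda_j}(u_j)\ge\int_U\int_{U\cap B(x,\tau\delta)}\frac{|u_j(x)-u_j(y)|}{|x-y|}\rho_{\lambda_j}(x-y)\,dy\,dx .
\]
By \eqref{eq:kernel-tail}, truncating $\rho_\lambda$ to $\{|h|<\tau\delta\}$ and renormalizing only changes its mass by a factor tending to $1$. So it suffices to prove the following local statement: if $v_j\to v$ in $L^1(U)$ and $\rho_j$ are radial mollifiers concentrating at $0$, then
\[
K_{1,n}|Dv|(V)\le\liminf_j\int_V\int_{V}\frac{|v_j(x)-v_j(y)|}{|x-y|}\rho_j(x-y)\,dy\,dx
\]
for every $V\Subset U$.

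\emph{Regularization of the sequence.} I would prove this local statement by mollification, as in Ponce/Nguyen. Let $\eta_\varepsilon$ be a standard mollifier with $\varepsilon<\dist(V,\partial U)$. By Jensen's inequality (convexity of $|\cdot|$) and Fubini, the energy of $v_j*\eta_\varepsilon$ on $V_{-\varepsilon}=\{x:\dist(x,\partial V)>\varepsilon\}$ is bounded by the energy of $v_j$ on $V$. As $j\to\infty$, $v_j*\eta_\varepsilon\to v*\eta_\varepsilon$ in $C^2$ on compact subsets, since $L^1$ convergence is upgraded by the smooth kernel. For smooth functions with uniformly bounded second derivatives, a Taylor expansion $|w(x)-w(y)|=|\nabla w(x)\cdot(y-x)|+O(|x-y|^2)$ gives
\[
\lim_j\int_{W}\int_W\frac{|w_j(x)-w_j(y)|}{|x-y|}\rho_j\,dy\,dx=K_{1,n}\int_W|\nabla w|\,dx .
\]
This holds for $W\Subset V_{-\varepsilon}$; the error term $\int|h|\rho_j(h)\,dh$ over small $h$ tends to $0$ by \eqref{eq:kernel-tail}, and the spherical average of $|\nabla w\cdot\sigma|$ produces $K_{1,n}$. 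Therefore
\[
K_{1,n}\int_W|\nabla(v*\eta_\varepsilon)|\le L .
\]
Letting $\varepsilon\to0$, lower semicontinuity of total variation gives $K_{1,n}|Dv|(W)\le L$.

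\emph{Exhaustion.} Apply this with $v_j=u_j$ and $v=u$. Exhausting $\Omega$ by $W\Subset V\Subset U\Subset\Omega$ yields $K_{1,n}|Du|(\Omega)\le L$. In particular $u\in BV(\Omega)$ (we already know $u\in L^1(\Omega)$), which is exactly $\mathcal F(u)\le L$.

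The main obstacle is the uniform Taylor step. Convergence $v_j*\eta_\varepsilon\to v*\eta_\varepsilon$ holds in $C^2(\overline W)$ because derivatives of $\eta_\varepsilon$ are bounded and $v_j\to v$ in $L^1$. The difficulty is controlling the cross terms from pairs with $|x-y|$ not small: their contribution is bounded by $2\|w_j\|_{L^\infty}\int_{|h|>r}\rho_j\,dh/r$, which vanishes by \eqref{eq:kernel-tail}. A secondary technical point is the Jensen/Fubini reduction on shrunken domains, and the cost of restricting the domain of integration when truncating the kernel. Both should be routine once the domains are nested with positive gaps.
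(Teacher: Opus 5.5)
Your proposal is correct and follows essentially the same route as the paper: (ii) uses the constant recovery sequence and Theorem~\ref{thm:main}, and (i) localizes to $U\Subset\Omega$, mollifies the sequence with a Jensen/Fubini bound on the regularized energy, passes $j\to\infty$ through a BBM limit for smooth functions, lets $\varepsilon\to0$ by lower semicontinuity of the total variation, and exhausts $\Omega$ (these are the paper's Lemmas~\ref{lem:smoothtrunc} and~\ref{lem:mollify}). The differences are cosmetic: you truncate and renormalize the kernel and use $C^2$ convergence with a Taylor expansion, whereas the paper keeps the constraint $|x-y|<R$ in the domain of integration and uses only uniform convergence of gradients together with the concentration Lemma~\ref{lem:abelian}; you should also take $\delta=\min\{1,\dist(U,\partial\Omega)\}$ so that the case $\Omega=\R^n$ is covered.
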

In other words, for an arbitrary open set $\Omega$ and for every sequence $\lambda_j\to0^+$ as $j\to\infty$,  the $\Gamma$-limit of the functionals $\mathcal F_{\lambda_j}(u)=\E_{\lambda_j,\tau}^{\rho}(u,\Omega)$ with respect to $L^1$-convergence is
$$K_{1,n}|Du|(\Omega).$$ 

As an immediate consequence of Theorems~\ref{thm:main} and \ref{thm:gamma}, we obtain
a characterization of BV functions.

\begin{cor}\label{cor:characterization}
Let $\Omega\subset\R^n$ be an arbitrary open set, let $0<\tau<1$, let
$u\in L^1(\Omega)$, and let
$\{\rho_\lambda\}_{\lambda>0}$ be a family of nonnegative radial functions satisfying \eqref{eq:kernel-mass} and
\eqref{eq:kernel-tail}. Then
\[
 u\in BV(\Omega)
 \quad\Longleftrightarrow\quad
 \liminf_{\lambda\to0^+}\E_{\lambda,\tau}^{\rho}(u,\Omega)<\infty.
\]
In this case, we have
\begin{equation}\label{lower-to-limit}
\lim_{\lambda\to0^+}\E_{\lambda,\tau}^{\rho}(u,\Omega)
=K_{1,n}|Du|(\Omega).
\end{equation}
\end{cor}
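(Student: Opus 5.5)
The plan is to deduce Corollary~\ref{cor:characterization} directly from Theorem~\ref{thm:main} and part (i) of Theorem~\ref{thm:gamma}.

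For the forward implication, assume $u\in BV(\Omega)$. Theorem~\ref{thm:main} gives that $\lim_{\lambda\to0^+}\E_{\lambda,\tau}^{\rho}(u,\Omega)$ exists and equals $K_{1,n}|Du|(\Omega)$. Since $u\in BV(\Omega)$, we have $|Du|(\Omega)<\infty$, so the liminf is finite. This argument also establishes \eqref{lower-to-limit} whenever $u\in BV(\Omega)$.

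For the converse, assume $L:=\liminf_{\lambda\to0^+}\E_{\lambda,\tau}^{\rho}(u,\Omega)<\infty$.
\begin{itemize}
\item First, choose a sequence $\lambda_j\to0^+$ with $\E_{\lambda_j,\tau}^{\rho}(u,\Omega)\to L$. Such a sequence exists by the definition of the liminf along the continuous parameter $\lambda$.
\item Next, apply Theorem~\ref{thm:gamma}(i) to this sequence $\lambda_j$ with the constant sequence $u_j\equiv u$, which converges to $u$ in $L^1(\Omega)$ trivially. This yields
\[
\mathcal F(u)\le\liminf_{j\to\infty}\mathcal F_{\lambda_j}(u)=L<\infty .
\]
\item By the definition \eqref{eq:Gamma-limit-functional}, $\mathcal F(u)=+\infty$ for $u\notin BV(\Omega)$. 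Hence $u\in BV(\Omega)$.
\end{itemize}
Once $u\in BV(\Omega)$ is known, the first paragraph applies again and gives \eqref{lower-to-limit}.

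Note that $u\in L^1(\Omega)$ is assumed, so $\mathcal F_\lambda(u)$ is well defined. Also, membership in $BV(\Omega)$ means $u\in L^1(\Omega)$ together with $|Du|(\Omega)<\infty$, so no further integrability check is needed.

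There is no real obstacle here, since the substantive work lies in the liminf inequality of Theorem~\ref{thm:gamma}(i). The only point to handle carefully is the passage from a liminf over the continuous parameter $\lambda\to0^+$ to a sequence, which is done by the choice of $\lambda_j$ in the first step above.
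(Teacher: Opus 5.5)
Your proposal is correct and matches the paper's proof essentially verbatim. The forward direction and \eqref{lower-to-limit} come from Theorem~\ref{thm:main}; for the converse, you choose $\lambda_j\to0^+$ realizing the liminf and apply Theorem~\ref{thm:gamma}(i) to the constant sequence $u_j=u$ to get $u\in BV(\Omega)$.
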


\subsection{Local and global compactness}
Compactness is another fundamental component of BBM theory.
Bourgain--Brezis--Mironescu \cite{BBM1} and Ponce \cite{PonceJEMS2004} proved that if $\Omega$ is a bounded Lipschitz domain, the uniform boundedeness of $L^p$ and nonlocal energy
 yields strong $L^p$ compactness, with limits in $W^{1,p}(\Omega)$ for $p>1$
and in $BV(\Omega)$ for $p=1$.  
More recently, compactness has been studied in substantially more general
settings. Gennaioli--Stefani \cite{GennaioliStefani2025} treated more general kernels in the whole space, while
Lin--Yang--Yuan--Zhang \cite{LinYangYuanZhang2026} established a compactness
theorem for Sobolev spaces associated with ball Banach function spaces on
bounded Lipschitz domains. Related BBM characterizations in ball Banach
function spaces and on more general underlying spaces can be found in
\cite{ZhuYangYuan2023,HuLiYangYuan2025}.

Therefore, this leads to the compactness results on arbitrary open sets addressed in this paper. The first compactness result
is local and therefore needs no geometric hypothesis on the boundary.

\begin{thm}\label{thm:compactness-local}
Let $\Omega\subset\R^n$ be an arbitrary open set, let $\lambda_j\to0^+$ as $j\to\infty$, and let
$u_j\in L^1(\Omega)$ satisfy
\begin{equation}\label{eq:compactness-assumption}
 \sup_{j\ge1}\left(
 \|u_j\|_{L^1(\Omega)}
 +\E_{\lambda_j,\tau}^{\rho}(u_j,\Omega)
 \right)<\infty.
\end{equation}
Then $(u_j)$ is relatively compact in $L^1_{\mathrm{loc}}(\Omega)$. More precisely,
there exist a subsequence $(u_{j_k})$  and a function $u\in BV(\Omega)$ such that
\[
 u_{j_k}\to u\qquad\text{in }L^1_{\mathrm{loc}}(\Omega),
\]
and
\begin{equation}\label{eq:compactness-liminf}
 K_{1,n}|Du|(\Omega)
 \le\liminf_{j_k\to\infty}\E_{\lambda_{j_k},\tau}^{\rho}(u_{j_k},\Omega).
\end{equation}
\end{thm}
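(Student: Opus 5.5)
The plan is to reduce to a local compactness statement on compactly contained subdomains, where the truncation $B(x,\tau\dist(x,\partial\Omega))$ contains a uniform ball. Then I would use a Fréchet--Kolmogorov type argument driven by the kernel concentration, and finish by diagonalization together with the $\Gamma$-liminf inequality of Theorem~\ref{thm:gamma}(i).

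\textbf{Step 1: reduction to compact subsets.} Fix an open $U\Subset\Omega$ and let $d=\dist(U,\partial\Omega)>0$. Choose $0<\delta<\tau d/2$ and set $U_\delta=\{x:\dist(x,U)<\delta\}\Subset\Omega$. For $x\in U_\delta$ we have $\dist(x,\partial\Omega)\ge d-\delta$. Shrinking $\delta$ if necessary, $\tau\dist(x,\partial\Omega)\ge r:=\tau d/2$. Hence for $x,y\in U_\delta$ with $|x-y|<r$ the pair $(x,y)$ is counted in $\E_{\lambda,\tau}^\rho$. This gives
\[
\int_{U_\delta}\int_{U_\delta\cap B(x,r)}\frac{|u_j(x)-u_j(y)|}{|x-y|}\rho_{\lambda_j}(x-y)\,dy\,dx\le \E_{\lambda_j,\tau}^\rho(u_j,\Omega)\le C.
\]

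\textbf{Step 2: a uniform translation estimate on $U$.} I would prove a bound of the form $\int_U|u_j(x+h)-u_j(x)|\,dx\le C'|h|+o_j(1)$ for $|h|<\delta$, uniformly in $j$. The standard Bourgain--Brezis--Mironescu/Ponce route is to mollify first. Let $v_j=u_j*\eta_\epsilon$ on $U$, with $\epsilon<\delta$ fixed. Then $\|u_j-v_j\|_{L^1(U)}$ is controlled by averages of $\int|u_j(x+z)-u_j(x)|\,dx$ over $|z|<\epsilon$. The key sub-lemma (Ponce's argument) is:
\[
\int_U|u(x+h)-u(x)|\,dx\le C_n|h|\,\frac{1}{\int_{B_{r}}\rho_{\lambda}}\int_{U_\delta}\int_{U_\delta\cap B(x,r)}\frac{|u(x)-u(y)|}{|x-y|}\rho_\lambda(x-y)\,dy\,dx
\]
for $|h|\ge$ a scale comparable to where $\rho_\lambda$ concentrates.

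To get it, I would use the radial symmetry and the chaining/triangle inequality $|u(x+h)-u(x)|\le|u(x+h)-u(x+z)|+|u(x+z)-u(x)|$, averaged against $\rho_\lambda(z)/|z|$. A cleaner alternative is to use the mollified functions. For smooth $v=u*\eta_\epsilon$, averaging gives $\int_{U}\int \frac{|v(x+z)-v(x)|}{|z|}\rho_\lambda(z)\,dz\,dx\le C$ for $\lambda$ small. A Davila-type lower bound then gives $\liminf_\lambda$ of the left side $\ge K_{1,n}\int_{U}|\nabla v|$, up to a mass-loss term, by the pointwise Taylor expansion uniformly on $U$, since $v$ is smooth with $C^2$ bounds depending on $\epsilon$ and $\|u_j\|_{L^1}$. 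This gives $\int_U|\nabla v_j|\le C/K_{1,n}+o(1)$ uniformly.

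\textbf{Step 3: compactness.} Take $\epsilon=\epsilon_k\to0$. The functions $v_j^{\epsilon}$ are bounded in $W^{1,1}(U)$ uniformly in $j$, so by Rellich they are precompact in $L^1(U)$. The remaining task is to show $\sup_j\|u_j-u_j*\eta_\epsilon\|_{L^1(U)}\to0$ as $\epsilon\to0$, for $j$ large. This is the main obstacle. It requires transferring the nonlocal energy bound at scale $\lambda_j$ to a small-translation estimate at scale $\epsilon$ when $\lambda_j$ concentrates at scales below $\epsilon$. I would handle it with Ponce's ``doubling'' lemma. Using radial symmetry and the tail condition \eqref{eq:kernel-tail}, one shows that $\Phi(t)=\int_{\Sn}\int_U|u(x+t\sigma)-u(x)|\,dx\,d\sigma/t$ satisfies $\Phi(t)\lesssim\Phi(s)$ up to constants for $s\le t$ (subadditivity in $t$ gives $\Phi(2t)\le\Phi(t)$-type bounds). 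Hence $\int_0^r\Phi(t)\tilde\rho_\lambda(t)t^{n-1}dt\le C$ forces $\Phi(\epsilon)\le C''$ for all $\epsilon$ in a fixed range once $\lambda_j$ is small. Finitely many $j$ are harmless since each is a single $L^1$ function.

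\textbf{Step 4: diagonalization and the limit inequality.} A total-boundedness argument on each $U$ gives a subsequence converging in $L^1(U)$. Diagonalizing over an exhaustion $U_m\Uparrow\Omega$ yields $u_{j_k}\to u$ in $L^1_{\rm loc}(\Omega)$, and Fatou gives $u\in L^1(\Omega)$. For \eqref{eq:compactness-liminf}, I would apply the $\Gamma$-liminf argument on each $U_m$ with the restricted energy from Step 1. That argument needs only local convergence: the restricted energies on $U_m$, with truncation radius determined by $\Omega$, are bounded by $\E^\rho_{\lambda_j,\tau}(u_j,\Omega)$, and by the proof of Theorem~\ref{thm:gamma}(i) applied locally they dominate $K_{1,n}|Du|(U_m')$ for $U_m'\Subset U_m$. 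Letting $m\to\infty$ gives $u\in BV(\Omega)$ and \eqref{eq:compactness-liminf}.
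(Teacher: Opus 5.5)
Your Steps 1 and 4 are sound, and Step 4 is exactly how the paper finishes: Lemma~\ref{lem:mollify} on each bounded $V\Subset\Omega$, then the duality \eqref{eq:BVdual}. The gap is in the compactness itself, at the point you call the main obstacle. You need a bound on translations of size $\epsilon$, uniform in $j\ge j_0(\epsilon)$. You derive it from $\Phi(t)\lesssim\Phi(s)$ for $s\le t$, justified by subadditivity. But subadditivity of $t\mapsto\int|u(x+t\sigma)-u(x)|\,dx$ only gives $\Phi(kt)\le\Phi(t)$ for integers $k$. The energy bound controls $\Phi$ only at radii where $\rho_{\lambda_j}$ carries mass. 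For a general non-monotone radial kernel this can be a thin shell $|h|\approx r_j$, so you control integer multiples of $r_j$ and never the leftover translations of size below $r_j$. Radial symmetry and the tail condition do not help with this.

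Notice also that your argument never uses $n\ge2$, and in dimension one the statement is false. Take $\rho_{\lambda_j}$ to be the normalized indicator of $\{h\in\R:\ \bigl||h|-1/j\bigr|<j^{-3}\}$ and $u_j(x)=\sin(2\pi jx)$ on an interval. On the support of the kernel, $|u_j(x+h)-u_j(x)|\le 2\pi j^{-2}$, so the energies tend to $0$. Meanwhile $\|u_j\|_{L^1}$ stays bounded, and no subsequence converges in $L^1_{\mathrm{loc}}$.

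The missing ingredient is the $n\ge2$ geometry behind Ponce's theorem. For $n\ge2$, the self-convolution of the normalized surface measure on $\partial B_s$ has density at least $c_n s^{-n}$ on $B_s$. Hence ball averages of $F(h)=\int|u(x+h)-u(x)|\,dx$ are bounded by the spherical average at radius $s$. Splitting $h=z+(h-z)$ with $z\in B(h/2,|h|/2)\subset B_s$ then bounds $F(h)$ for $s/2\le|h|\le s$, and dyadic subadditivity handles the remaining scales. This yields the comparison you want, for all $s\le t$. Separately, the Davila-type bound on $\int_U|\nabla v_j|$ in Step 2 is unnecessary: for fixed $\epsilon$, the functions $u_j*\eta_\epsilon$ are equibounded and equi-Lipschitz on $\overline U$ from the $L^1$ bound alone.

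The paper avoids all of this. It multiplies by a cutoff $\eta\in C_c^\infty(\Omega)$ equal to $1$ near $\overline U$. Lemma~\ref{lem:cutoff} then shows that the whole-space energy $\mathcal B_{\lambda_j}(\eta\widetilde u_j)$ is at most $\E_{\lambda_j,\tau}^{\rho}(u_j,\Omega)+(\|\nabla\eta\|_{L^\infty}+2/\delta)\|u_j\|_{L^1(\Omega)}$. The reason is that pairs with $|x-y|<\delta<\tau\inf_{\supp\eta}d_\Omega$ are admissible in $\E_{\lambda,\tau}^{\rho}$, while the remaining pairs cost at most $2\|u_j\|_{L^1}/\delta$. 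The paper then applies Ponce's compactness theorem on a ball, which is where $n\ge2$ enters. To complete your Fréchet--Kolmogorov route, either prove the comparison lemma above explicitly or use this cutoff reduction.
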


On a bounded open set, we have the following global compactness result.
\begin{thm}\label{thm:compactness}
Let $\Omega\subset\R^n$ be bounded and open, let $\lambda_j\to0^+$ as $j\to\infty$, and let
$u_j\in L^1(\Omega)$ satisfy \eqref{eq:compactness-assumption}. Assume in addition
that $(u_j)$ is uniformly integrable in $L^1(\Omega)$. Then $(u_j)$ is relatively compact in
$L^1(\Omega)$. More precisely, there exist a subsequence  $(u_{j_k})$  and
$u\in BV(\Omega)$ such that
\[
 u_{j_k}\to u\qquad\text{in }L^1(\Omega),
\]
and \eqref{eq:compactness-liminf} holds.
\end{thm}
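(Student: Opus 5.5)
The plan is to deduce Theorem~\ref{thm:compactness} from the local result, Theorem~\ref{thm:compactness-local}, and then use uniform integrability together with boundedness of $\Omega$ to upgrade local convergence to global convergence. First apply Theorem~\ref{thm:compactness-local} to $(u_j)$. This gives a subsequence $(u_{j_k})$ and some $u\in BV(\Omega)$ with $u_{j_k}\to u$ in $L^1_{\mathrm{loc}}(\Omega)$ and with \eqref{eq:compactness-liminf}. Since the liminf inequality is part of the local statement, all that remains is to show $u_{j_k}\to u$ in $L^1(\Omega)$.

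Fix $\varepsilon>0$. Boundedness of $\Omega$ gives $|\Omega|<\infty$. Uniform integrability gives $\delta>0$ such that
\[
\sup_k\int_E|u_{j_k}|\,dx<\varepsilon \quad\text{whenever } E\subset\Omega,\ |E|<\delta .
\]
By Fatou's lemma along an a.e.\ convergent further subsequence (available on each compact set, and by a diagonal argument on all of $\Omega$), the same bound holds for $|u|$; alternatively one can simply use $u\in L^1(\Omega)$ and shrink $\delta$.

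Next choose a compact set $K\subset\Omega$ with $|\Omega\setminus K|<\delta$. For instance take $K=\{x\in\Omega:\dist(x,\partial\Omega)\ge t\}$ with $t$ small; these sets are compact because $\Omega$ is bounded, and $|\Omega\setminus K|\to0$ as $t\to0$ by monotone convergence, using $|\Omega|<\infty$. Then
\[
\int_\Omega|u_{j_k}-u|\,dx\le\int_K|u_{j_k}-u|\,dx+\int_{\Omega\setminus K}|u_{j_k}|\,dx+\int_{\Omega\setminus K}|u|\,dx .
\]
The first term tends to $0$ by the local convergence, and the other two terms are each less than $\varepsilon$. Hence $\limsup_k\|u_{j_k}-u\|_{L^1(\Omega)}\le2\varepsilon$, and letting $\varepsilon\to0$ gives the claimed convergence. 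This also yields relative compactness of $(u_j)$ in $L^1(\Omega)$, since every subsequence satisfies the same hypotheses.

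No step is a real obstacle once Theorem~\ref{thm:compactness-local} is available. The only points needing care are that $|\Omega|<\infty$ is what makes the exhaustion by compact sets have small complement in measure, and that the limit $u$ is controlled near $\partial\Omega$. The latter is immediate because $u\in BV(\Omega)\subset L^1(\Omega)$, and in any case Fatou gives $\|u\|_{L^1(\Omega)}\le\liminf_k\|u_{j_k}\|_{L^1(\Omega)}$. Uniform integrability is used in the form of equi-absolute continuity of the integrals only, so no tightness condition is needed, as $\Omega$ is bounded.
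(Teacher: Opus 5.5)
Your proposal is correct and follows the paper's proof: you apply Theorem~\ref{thm:compactness-local} to get an $L^1_{\mathrm{loc}}$-convergent subsequence with limit $u\in BV(\Omega)$ satisfying \eqref{eq:compactness-liminf}, and then upgrade to convergence in $L^1(\Omega)$ using boundedness of $\Omega$ and uniform integrability. The only difference is that the paper gets this last step by citing Vitali's theorem (Lemma~\ref{lem:local-to-global}), whereas you prove it directly by splitting $\Omega$ into the compact set $\{d_\Omega\ge t\}$ and a complement of small measure, which is the standard proof of that lemma.
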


For uniform integrability, we refer to Section~\ref{unform-int}. Counterexamples showing that   neither the boundedness of the domain nor the uniform integrability assumption in Theorem~\ref{thm:compactness} can be removed are provided in Sections~\ref{subsec:unbounded-counterexample} and \ref{subsec:ui-counterexample}.

The paper is organized as follows. In Section~\ref{sec:preliminaries}, we introduce
some preliminary concepts, a concentration lemma for radial mollifiers, and a convergence lemma for 
uniform integrability. In
Section~\ref{sec:bbm-formula}, we prove Theorem~\ref{thm:main} and
Corollary~\ref{cor:main}. Section~\ref{sec:gamma} is devoted to the proof of Theorem~\ref{thm:gamma} and
Corollary~\ref{cor:characterization}. In Section~\ref{sec:compactness}, we prove the
two compactness theorems, Theorems~\ref{thm:compactness-local} and
\ref{thm:compactness}.
Finally, Section~\ref{sec:counterexample} collects three aforementioned counterexamples. 

\section{Preliminaries}\label{sec:preliminaries}
Throughout the paper, let $\Omega\subset\mathbb R^n$, $n\ge2$, be an open set.  For $x\in\Omega$, we write
\[
d_\Omega(x)
:=
\operatorname{dist}(x,\partial\Omega)
=
\inf_{y\in\partial\Omega}|x-y|,
\]
with the convention $d_{\R^n}\equiv+\infty$ when $\Omega=\R^n$.
The open Euclidean ball centered at $x\in\mathbb R^n$ with radius $r>0$ is denoted by
$
B(x,r).
$
We write $\mathcal L^n$ for the $n$-dimensional Lebesgue measure and $\mathcal H^{n-1}$ for the $(n-1)$-dimensional Hausdorff measure. In this paper, unless otherwise stated,
$\{\rho_\lambda\}_{\lambda>0}$ denotes a family of nonnegative radial functions satisfying
\eqref{eq:kernel-mass} and \eqref{eq:kernel-tail}.
\subsection{Functions of bounded variation and directional variation} The theory of BV functions we rely on can be found in monographs \cite{AFP,EvansGariepy}. 
A function $u\in L^1(\Omega)$ is of bounded variation, denoted by $u\in BV(\Omega)$, if its distributional gradient $Du=(D_1u,\ldots,D_nu)$ is a finite $\R^n$-valued Radon measure on $\Omega$. Its variation measure is denoted by $|Du|$ and its total variation $|Du|(\Omega)$ can be defined by
\begin{equation}\label{eq:BVdual}
 |Du|(\Omega)=\sup\left\{\int_\Omega u\,\operatorname{div}\phi\,dx:
 \phi\in C_c^1(\Omega;\R^n),\ |\phi|\le1\right\}.
\end{equation}
The distributional gradient of $u\in BV(\Omega)$ is
\begin{equation}\label{eq:polar}
 Du=\nu_u|Du|,\qquad |\nu_u(x)|=1\quad\text{for }|Du|\text{-a.e. }x\in\Omega.
\end{equation}
For $\sigma\in\Sn$, define the directional derivative measure
\begin{equation*}\label{eq:Dsig}
 D_\sigma u:=\sigma\cdot Du.
\end{equation*}
By \eqref{eq:polar}, we have
\begin{equation*}\label{eq:Dsigvar}
 |D_\sigma u|=|\sigma\cdot\nu_u|\,|Du|.
\end{equation*}
Therefore, the Fubini theorem and rotational invariance yield
\begin{align}\label{eq:sphere-var}
 \int_{\Sn}|D_\sigma u|(\Omega)\,d\Hn(\sigma)
 =\int_\Omega\int_{\Sn}|\sigma\cdot\nu_u(x)|\,d\Hn(\sigma)\,d|Du|(x)
 &=K_n|Du|(\Omega).
\end{align}

Moreover, when $n=1$, let $(a,b)\subset\R$ be an open interval and let
$f\in BV((a,b))$. Then it follows from \cite[Theorem 5.21]{EvansGariepy} that
\begin{equation}\label{eq:essvar}
|Df|((a,b))=\operatorname{ess}V_{a}^b\, f.
\end{equation}
Here $\operatorname{ess}V_{a}^b\, f$ is  the essential variation of $f$ on $(a,b)$, defined by
\[
\operatorname{ess}V_{a}^b\, f
:=
\sup\left\{
\sum_{j=1}^{m}
|f(t_{j+1})-f(t_j)|
\right\},
\]
where the supremum is taken over all finite partitions
$a<t_1<\cdots<t_{m+1}<b$
such that each $t_j$ is a point of approximate continuity of $f$.
For more details, see \cite[Section 5.10]{EvansGariepy}.
\subsection{One-dimensional restrictions of BV functions}\label{one-d-restriction}

Fix $\sigma\in\Sn$ and let $\sigma^\perp$ be the hyperplane orthogonal to $\sigma$, that is, 
\begin{equation*}\label{eq:sigperp}
\sigma^\perp:=\{z\in\R^n:z\cdot\sigma=0\}.
\end{equation*}
Let $\Omega_\sigma$ be the orthogonal projection of $\Omega$ on $\sigma^\perp$. 
Then for any $z\in\Omega_\sigma$, the section of $\Omega$ corresponding to $z$
\begin{equation*}\label{eq:slice-domain}
\Omega_z^\sigma
:=
\{t\in\R:z+t\sigma\in\Omega\}
\end{equation*}
is not empty. 
For any function $u: \Omega\to \mathbb R$ and any $z\in \Omega_\sigma$, define the function $u_z^\sigma: \Omega_z^\sigma\to \mathbb R$ by 
\begin{equation*}\label{eq:slice-fn}
u_z^\sigma(t)=u(z+t\sigma).
\end{equation*}

Then it follows from 
\cite[Theorem 3.107]{AFP}  that for $u\in BV(\Omega)$ and for
$ \mathcal H^{n-1}$-a.e. $z\in\Omega_\sigma$, the function $u_z^\sigma$ belongs to
$BV(\Omega_z^\sigma)$, and the signed measure $D_\sigma u$ coincides with $\Hn|_{\Omega_\sigma} \otimes Du_z^\sigma$. In particular,
for any bounded Borel function $\varphi: \Omega\to \mathbb R$,
\begin{equation*}\label{eq:slice-signed}
\int_\Omega \varphi\,dD_\sigma u
=
\int_{\Omega_\sigma}
\int_{\Omega_z^\sigma}
\varphi(z+t\sigma)\,dDu_z^\sigma(t)\,
d\Hn(z).
\end{equation*}
The corresponding disintegration of the total variation gives
\begin{equation}\label{eq:slice-variation}
 |D_\sigma u|(\Omega)
 =
 \int_{\Omega_\sigma}|Du_z^\sigma|(\Omega_z^\sigma)\,d\Hn(z).
\end{equation}
For more on one-dimensional restrictions of BV functions, we refer interested readers to \cite[Section 3.11]{AFP}.

\subsection{A concentration lemma for radial mollifiers}
\begin{lem}\label{lem:abelian}
Let $\sigma\in \mathbb S^{n-1}$ and $R>0$.  Let $0\leq A\leq M<\infty$. Assume that $g:(0,R)\to[0,\infty)$ is a bounded measurable function such that \[ 0\le g(r)\le M \qquad\text{for all }r\in(0,R), \] and suppose that \[ \lim_{r\to0^+}g(r)=A. \] 
If $\{\rho_\lambda\}_{\lambda>0}$ is a family of nonnegative radial functions satisfying \eqref{eq:kernel-mass} and \eqref{eq:kernel-tail}, then we have
\begin{equation}\label{eq:abelian-limit}
\lim_{\lambda\to0^+} \int_0^R \rho_\lambda(r\sigma)\,r^{n-1}g(r)\,dr = \frac{A}{\mathcal H^{n-1}(\mathbb S^{n-1})}
\end{equation}
and
\begin{equation}\label{eq:abelian-bounds}
0\le \int_0^R \rho_\lambda(r\sigma)\,r^{n-1}g(r)\,dr \le \frac{M}{\mathcal H^{n-1}(\mathbb S^{n-1})}. 
\end{equation}
\end{lem}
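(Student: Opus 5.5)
Since $\rho_\lambda$ is radial, we write $\rho_\lambda(h)=\tilde\rho_\lambda(|h|)$. Then $\rho_\lambda(r\sigma)=\tilde\rho_\lambda(r)$ does not depend on $\sigma$. Integrating in polar coordinates, \eqref{eq:kernel-mass} becomes
\[
\mathcal H^{n-1}(\mathbb S^{n-1})\int_0^\infty \tilde\rho_\lambda(r)\,r^{n-1}\,dr=1,
\]
and \eqref{eq:kernel-tail} becomes
\[
\mathcal H^{n-1}(\mathbb S^{n-1})\int_\delta^\infty \tilde\rho_\lambda(r)\,r^{n-1}\,dr\to0 \qquad\text{as }\lambda\to0^+,\ \text{for every }\delta>0.
\]
Set $m_\lambda(E):=\mathcal H^{n-1}(\mathbb S^{n-1})\int_E\tilde\rho_\lambda(r)r^{n-1}\,dr$. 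Then $m_\lambda$ is a probability measure on $(0,\infty)$ whose mass concentrates at $0$. The claim is the standard statement that integrating a function continuous at $0$ against such measures gives its value at $0$.

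For the bound \eqref{eq:abelian-bounds}, note that $0\le g\le M$ and the integrand is nonnegative. Hence
\[
0\le\int_0^R\rho_\lambda(r\sigma)r^{n-1}g(r)\,dr\le M\int_0^\infty\tilde\rho_\lambda(r)r^{n-1}\,dr=\frac{M}{\mathcal H^{n-1}(\mathbb S^{n-1})}.
\]
There is a technical point here: $\rho_\lambda\in L^1$ is only defined a.e. To make $r\mapsto\rho_\lambda(r\sigma)$ meaningful, I would fix a radial profile $\tilde\rho_\lambda$, which is measurable with $\tilde\rho_\lambda(r)r^{n-1}\in L^1(0,\infty)$ by polar coordinates, and interpret $\rho_\lambda(r\sigma)$ as $\tilde\rho_\lambda(r)$.

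For the limit \eqref{eq:abelian-limit}, fix $\varepsilon>0$ and choose $\delta\in(0,R)$ with $|g(r)-A|<\varepsilon$ on $(0,\delta)$. Multiply by $\mathcal H^{n-1}(\mathbb S^{n-1})$ and split
\[
\mathcal H^{n-1}(\mathbb S^{n-1})\int_0^R\tilde\rho_\lambda r^{n-1}g\,dr - A
= \int_{(0,\delta)}(g-A)\,dm_\lambda + \int_{[\delta,R)}g\,dm_\lambda - A\,m_\lambda([\delta,\infty)),
\]
using $m_\lambda((0,\infty))=1$. The three terms are bounded as follows:
\begin{itemize}
\item the first is at most $\varepsilon$ in absolute value;
\item the second is at most $M\,m_\lambda([\delta,\infty))$;
\item the third is at most $A\,m_\lambda([\delta,\infty))$.
\end{itemize}
Both tail terms tend to $0$ by \eqref{eq:kernel-tail}. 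Therefore the $\limsup$ of the absolute difference is at most $\varepsilon$, and letting $\varepsilon\to0$ gives the result.

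The only mildly delicate step is the polar-coordinate reduction. It must show that the one-dimensional profile integral is the same for every $\sigma$ and carries total mass $1/\mathcal H^{n-1}(\mathbb S^{n-1})$. This follows from radiality together with Fubini in polar coordinates, and the rest is routine.
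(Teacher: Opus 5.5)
Your proof is correct and follows essentially the same route as the paper. Polar coordinates give $\int_0^\infty\rho_\lambda(r\sigma)r^{n-1}\,dr=1/\mathcal H^{n-1}(\mathbb S^{n-1})$, which yields the bound, and the limit comes from splitting at a $\delta$ where $|g-A|<\varepsilon$ and sending the tail mass to zero via \eqref{eq:kernel-tail}; the differences are only cosmetic (your probability measure $m_\lambda$, bounding the middle piece by $M$ instead of $M+A$, and your harmless remark on fixing a radial profile).
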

\begin{proof}
Since $\rho_\lambda$ is radial and satisfies \eqref{eq:kernel-mass},
by applying the polar coordinates, we have that for every
$\sigma\in\mathbb S^{n-1}$,
\begin{equation}\label{unit}
\int_0^\infty
\rho_\lambda(r\sigma)\,r^{n-1}\,dr
=
\frac1{\mathcal H^{n-1}(\mathbb S^{n-1})}.
\end{equation}
Consequently, 
\[ 0\le \int_0^R \rho_\lambda(r\sigma)\,r^{n-1}g(r)\,dr \le M\int_0^\infty \rho_\lambda(r\sigma)\,r^{n-1}\,dr = \frac{M}{\mathcal H^{n-1}(\mathbb S^{n-1})}, \]
which proves \eqref{eq:abelian-bounds}.

It is left to prove \eqref{eq:abelian-limit}. Fix $\varepsilon>0$. Since
$g(r)\to A$ as $r\to0^+$, there exists
$\delta\in(0,R)$ such that
\[
|g(r)-A|<\varepsilon
\qquad\text{for }0<r<\delta.
\]
Combining this with \eqref{unit} yields
\begin{align*}
&\left|
\int_0^R
\rho_\lambda(r\sigma)\,r^{n-1}g(r)\,dr
-
\frac{A}{\mathcal H^{n-1}(\mathbb S^{n-1})}
\right|
\\
\le&
\int_0^R
\rho_\lambda(r\sigma)\,r^{n-1}|g(r)-A|\,dr
+
A\int_R^\infty
\rho_\lambda(r\sigma)\,r^{n-1}\,dr
\\
\le&
\varepsilon
\int_0^\delta
\rho_\lambda(r\sigma)\,r^{n-1}\,dr
+
(M+A)
\int_\delta^R
\rho_\lambda(r\sigma)\,r^{n-1}\,dr
+
A\int_R^\infty
\rho_\lambda(r\sigma)\,r^{n-1}\,dr
\\
\le&
\frac{\varepsilon}{\mathcal H^{n-1}(\mathbb S^{n-1})}
+
(M+A)
\int_\delta^\infty
\rho_\lambda(r\sigma)\,r^{n-1}\,dr.
\end{align*}
Moreover, 
\[
\int_\delta^\infty
\rho_\lambda(r\sigma)\,r^{n-1}\,dr
=
\frac1{\mathcal H^{n-1}(\mathbb S^{n-1})}
\int_{\mathbb R^n\setminus B_\delta(0)}
\rho_\lambda(x)\,dx.
\]
This quantities tend to zero as $\lambda\to0^+$, because $\{\rho_\lambda\}_{\lambda>0}$ satisfies \eqref{eq:kernel-tail}.
Therefore
\[
\limsup_{\lambda\to0^+}
\left|
\int_0^R
\rho_\lambda(r\sigma)\,r^{n-1}g(r)\,dr
-
\frac{A}{\mathcal H^{n-1}(\mathbb S^{n-1})}
\right|
\le
\frac{\varepsilon}{\mathcal H^{n-1}(\mathbb S^{n-1})}.
\]
Since $\varepsilon>0$ is arbitrary, we obtain \eqref{eq:abelian-limit} and completes the proof.
\end{proof}

\subsection{Uniform integrability and a convergence lemma}\label{unform-int}
A sequence $(u_j)\subset L^1(\Omega)$ is said to be \emph{uniformly integrable}
if for every $\varepsilon>0$, there exists $\delta>0$ such that, for every
measurable set $E\subset\Omega$ with
\[
\mathcal L^n(E)<\delta,
\]
one has
\[
\int_E |u_j(x)|\,dx<\varepsilon
\quad\text{for all}\ j.
\]

\begin{lem}\label{lem:local-to-global}
Let $\Omega\subset\R^n$ be bounded and open. Suppose that
$u_j\to u$ in $L^1_{\mathrm{loc}}(\Omega)$ and that $(u_j)$ is uniformly
integrable in $L^1(\Omega)$. Then $u\in L^1(\Omega)$ and
\[
 u_j\to u\qquad\text{in }L^1(\Omega).
\]
\end{lem}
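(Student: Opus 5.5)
The plan is a Vitali-type argument: exhaust $\Omega$ by compact sets, use local convergence on the compact part, and use uniform integrability to control the remaining boundary layer uniformly in $j$. Boundedness of $\Omega$ matters only because it makes the measure of the boundary layer small.

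\emph{Step 1: choose the exhaustion.} Put
\[
K_m:=\{x\in\Omega:\ d_\Omega(x)\ge 1/m\}.
\]
Each $K_m$ is compact because $\Omega$ is bounded, and $K_m\uparrow\Omega$. Since $\mathcal L^n(\Omega)<\infty$, continuity of measure gives $\mathcal L^n(\Omega\setminus K_m)\to0$.

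\emph{Step 2: show $u\in L^1(\Omega)$.} Uniform integrability on a set of finite measure implies $\sup_j\|u_j\|_{L^1(\Omega)}<\infty$. To see this, take $\varepsilon=1$ with its $\delta$. Cover $K_{m_0}$ by finitely many, say $N$, sets of measure $<\delta$, where $m_0$ is chosen so that $\mathcal L^n(\Omega\setminus K_{m_0})<\delta$. This gives the bound $\sup_j\|u_j\|_{L^1(\Omega)}\le N+1$.

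Local convergence gives a subsequence with $u_j\to u$ a.e. in $\Omega$: on each $K_m$ take a subsequence and then diagonalize. Fatou's lemma then yields
\[
\|u\|_{L^1(\Omega)}\le\liminf_j\|u_j\|_{L^1(\Omega)}<\infty .
\]

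\emph{Step 3: control the tail of $u$.} Fatou also shows that $u$ inherits the uniform-integrability estimate. If $\mathcal L^n(E)<\delta$, then
\[
\int_E|u|\le\liminf_j\int_E|u_j|\le\varepsilon .
\]
Alternatively, this follows directly from absolute continuity of the integral, since $u\in L^1(\Omega)$.

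\emph{Step 4: conclude.} Fix $\varepsilon>0$, take $\delta$ from uniform integrability, and choose $m$ with $\mathcal L^n(\Omega\setminus K_m)<\delta$. Then
\[
\int_\Omega|u_j-u|\le\int_{K_m}|u_j-u|+\int_{\Omega\setminus K_m}|u_j|+\int_{\Omega\setminus K_m}|u| .
\]
The last two terms are each at most $\varepsilon$ by Steps 1 and 3. The first term tends to zero as $j\to\infty$ by $L^1_{\mathrm{loc}}$ convergence on the compact set $K_m$. Hence $\limsup_j\|u_j-u\|_{L^1(\Omega)}\le2\varepsilon$ for every $\varepsilon>0$, which proves the lemma for the full sequence. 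The a.e. subsequence was needed only to establish integrability of $u$.

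The only delicate point is the uniform $L^1$ bound and the integrability of $u$ in Step 2. Even this is routine: the covering argument above settles it, and absolute continuity of the integral gives Step 3.
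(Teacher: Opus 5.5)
Your proof is correct and is essentially the paper's argument: the paper simply invokes Vitali's convergence theorem, noting that on bounded $\Omega$ the $L^1_{\mathrm{loc}}$ convergence gives convergence in measure and that uniform integrability is Vitali's equiintegrability hypothesis, while you carry out that Vitali argument by hand by splitting $\Omega$ into the compact core $K_m$ and the thin layer $\Omega\setminus K_m$. Your derivations of the uniform $L^1$ bound and of $u\in L^1(\Omega)$ (covering plus Fatou) supply exactly the details the citation leaves implicit.
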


This is a direct application of the Vitali convergence theorem; see
\cite[Exercise~1.18]{AFP}. Indeed, since $\Omega$ is bounded, local $L^1$
convergence implies convergence in measure on $\Omega$, and the  uniform
integrability is precisely the equiintegrability hypothesis in Vitali's theorem. See \cite[Definition~1.26]{AFP} for the definition of equiintegrability.

\section{Bourgain--Brezis--Mironescu formula for BV functions }\label{sec:bbm-formula}
In this section, we mainly prove Theorem~\ref{thm:main} and Corollary~\ref{cor:main}. Before the proofs, we first give some auxiliary results.

\begin{lem}\label{lem:1d}
Let $I\subset\R$ be an arbitrary open set, let $v\in BV(I)$, and let $r>0$. Set
\begin{equation*}\label{eq:ArI}
 A_r(I):=\{t\in I:[t,t+r]\subset I\}.
\end{equation*}
Then
\begin{equation*}\label{eq:1d-trans}
 \int_{A_r(I)}|v(t+r)-v(t)|\,dt\le r|Dv|(I).
\end{equation*}
\end{lem}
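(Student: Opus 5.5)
We reduce to the connected components of $I$. An open set $I\subset\R$ is a countable disjoint union of open intervals $I_k=(a_k,b_k)$, possibly unbounded. If $t\in A_r(I)$, then $[t,t+r]$ is connected and contained in $I$, so it lies in a single component $I_k$. Hence $A_r(I)=\bigcup_k A_r(I_k)$ is a disjoint union, with $A_r(I_k)=(a_k,b_k-r)$ (empty if $b_k-a_k\le r$). Moreover $|Dv|(I)=\sum_k|Dv|(I_k)$ and $v|_{I_k}\in BV(I_k)$. It therefore suffices to prove the inequality with $I$ replaced by a single open interval $(a,b)$ and $v\in BV((a,b))$, and then sum over $k$.

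For an interval $(a,b)$, use the good representative. By the one-dimensional theory \cite[Section 5.10]{EvansGariepy} (cf.\ \eqref{eq:essvar}), $v$ has a representative $\bar v$ whose pointwise variation on $(a,b)$ equals $|Dv|((a,b))$. For instance, take $\bar v(t)=c+Dv((a,t])$, which agrees with $v$ a.e. Then for every $a<s<s'<b$,
\[
|\bar v(s')-\bar v(s)|\le |Dv|((s,s']).
\]
For $t\in(a,b-r)$ this gives $|v(t+r)-v(t)|\le |Dv|((t,t+r])$ for a.e.\ $t$.

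Next, integrate and apply Fubini:
\[
\int_a^{b-r}|Dv|((t,t+r])\,dt=\int_{(a,b)}\int_a^{b-r}\mathbbm 1_{\{t<s\le t+r\}}\,dt\,d|Dv|(s)\le r\,|Dv|((a,b)),
\]
because for fixed $s$ the set of admissible $t$ lies in $[s-r,s)$, which has length at most $r$. Summing over the components then yields the claim.

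The main obstacle is technical: justifying the good representative, and the identity $\bar v(s')-\bar v(s)=Dv((s,s'])$, on possibly unbounded intervals. An alternative route avoids representatives altogether. Mollify: take $v_\varepsilon=v*\eta_\varepsilon$ on $(a+\delta,b-\delta)$. For smooth $v_\varepsilon$, Fubini gives $\int|v_\varepsilon(t+r)-v_\varepsilon(t)|\,dt\le r\int|v_\varepsilon'|$. Then let $\varepsilon\to0$ using $L^1_{\mathrm{loc}}$ convergence and Fatou on the left, and $|Dv_\varepsilon|\le |Dv|$ on slightly larger intervals on the right. Finally let $\delta\to0$ by monotone convergence. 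I would use the representative argument as the primary route and keep the mollification argument as a backup.
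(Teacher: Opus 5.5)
Your proof is correct and is essentially the paper's argument. Both first bound $|v(t+r)-v(t)|$ a.e. by the $|Dv|$-mass of the segment between $t$ and $t+r$, then apply Fubini, using that each point is reached by a set of $t$ of length at most $r$. The only difference is how the pointwise bound is obtained: you use the representative $c+Dv((a,t])$ on each connected component, whereas the paper uses approximate-continuity points of the precise representative and applies \eqref{eq:essvar} on a slightly enlarged interval $(t-\varepsilon,t+r+\varepsilon)$ inside the same component.
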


\begin{proof}
Let $J_v$ be the set of points at which the precise representative of $v$ is not approximately continuous. Then $\mathcal L^1(J_v)=0$ (cf. \cite[page 238]{EvansGariepy}). Hence
\[
 N_r:=J_v\cup(J_v-r)
\]
is a null set. Fix $t\in A_r(I)\setminus N_r$. Then both $t$ and $t+r$ are points of approximate continuity of $v$. Since $[t,t+r]\subset I$ and this interval is connected, it is contained in one connected component of $I$. Therefore there exists $\varepsilon>0$ such that
\[
 [t-\varepsilon,t+r+\varepsilon]\subset I.
\]
By \eqref{eq:essvar} and the definition of the essential variation on $(t-\varepsilon,t+r+\varepsilon)$, we obtain
\[
 |v(t+r)-v(t)|
 \le \operatorname{ess}V_{t-\varepsilon}^{t+r+\varepsilon}\, v
 =|Dv|((t-\varepsilon,t+r+\varepsilon)).
\]
Since $|Dv|$ is a Radon measure, letting $\varepsilon\to 0$ yields that for $\mathcal L^1$-a.e. $t\in A_r(I)$,
\begin{equation}\label{eq:1d-essvar}
 |v(t+r)-v(t)|\le |Dv|([t,t+r]).
\end{equation}

By integrating \eqref{eq:1d-essvar} with respect to $t$ and applying the Fubini theorem, we obtain
\begin{align*}
 \int_{A_r(I)}|v(t+r)-v(t)|\,dt
 &\le\int_{A_r(I)}|Dv|([t,t+r])\,dt=\int_{A_r(I)}\int_{[t, t+r]} d|Dv|(q)\,dt\\
 &=\int_I\mathcal L^1\bigl(\{t\in A_r(I):q\in[t,t+r]\}\bigr)\,d|Dv|(q).
\end{align*}
For fixed $q$, since the condition $q\in[t,t+r]$ implies $q-r\leq t\leq q$, we have
\[
 \mathcal L^1\bigl(\{t\in A_r(I):q\in[t,t+r]\}\bigr)\leq r.
\]
Consequently
\[
 \int_{A_r(I)}|v(t+r)-v(t)|\,dt\le r|Dv|(I),
\]
which completes the proof.
\end{proof}

For $\sigma\in\Sn$ and $r>0$, define the $n$-dimensional admissible set
\begin{equation*}\label{eq:mathcalAr}
 \A_r:=\{y\in\Omega:r<\tau d_\Omega(y)\}
\end{equation*}
and
\begin{equation*}\label{eq:Gsig}
 G_\sigma(r):=\int_{\A_r}|u(y+r\sigma)-u(y)|\,dy.
\end{equation*}
If $y\in\A_r$, then $r<d_\Omega(y)$, and hence, $y+\theta\sigma\in B(y,d_\Omega(y))\subset\Omega$ for every $0\le\theta\le r$. 

For $z\in \Omega_\sigma$, we define the  section of $\A_r$  corresponding to $z$ 
\begin{equation*}\label{eq:Arz}
 \A_{r,z}^\sigma
 :=\{t\in\Omega_z^\sigma:r<\tau d_\Omega(z+t\sigma)\}.
\end{equation*}
Then we have the following implications
\begin{equation}\label{eq:slice-Ar}
 t\in\A_{r,z}^\sigma\quad\Longleftrightarrow\quad z+t\sigma\in\A_r.
\end{equation}

\begin{prop}\label{prop:dir-upper}
Let $u\in BV(\Omega)$, $\sigma\in\Sn$, and $r>0$. Then
\begin{equation}\label{eq:g-upper}
 0\le\frac{G_\sigma(r)}r\le |D_\sigma u|(\Omega).
\end{equation}
\end{prop}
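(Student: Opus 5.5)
The plan is to slice $\Omega$ along lines parallel to $\sigma$, apply Lemma~\ref{lem:1d} on each slice, and then reassemble the pieces with the disintegration \eqref{eq:slice-variation}. The lower bound $G_\sigma(r)\ge0$ is trivial, so only the upper bound needs work.

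\emph{Step 1: Fubini decomposition.} Write $y=z+t\sigma$ with $z\in\sigma^\perp$ and $t\in\R$; this change of variables is an isometry. Fubini gives
\[
G_\sigma(r)=\int_{\Omega_\sigma}\int_{\A_{r,z}^\sigma}|u_z^\sigma(t+r)-u_z^\sigma(t)|\,dt\,d\Hn(z),
\]
where \eqref{eq:slice-Ar} identifies the $z$-section of $\A_r$ with $\A_{r,z}^\sigma$. There is one measurability point: $\A_r$ is open, because $d_\Omega$ is continuous, so the integrand is measurable. There is also a representative issue. Changing $u$ on a null set changes $u_z^\sigma$ only on null sets of $t$ for $\Hn$-a.e.\ $z$, so we may use, on each slice, any representative of $u_z^\sigma$, in particular the precise representative used in Lemma~\ref{lem:1d}.

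\emph{Step 2: inclusion of sets.} We need $\A_{r,z}^\sigma\subset A_r(\Omega_z^\sigma)$. This follows from the remark before \eqref{eq:slice-Ar}. If $z+t\sigma\in\A_r$, then $r<\tau d_\Omega(z+t\sigma)<d_\Omega(z+t\sigma)$. Hence $z+(t+\theta)\sigma\in\Omega$ for all $0\le\theta\le r$, which means $[t,t+r]\subset\Omega_z^\sigma$.

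\emph{Step 3: one-dimensional estimate.} For $\Hn$-a.e.\ $z\in\Omega_\sigma$ we have $u_z^\sigma\in BV(\Omega_z^\sigma)$ (Section~\ref{one-d-restriction}), and $\Omega_z^\sigma$ is an open subset of $\R$. Since the integrand is nonnegative, Step~2 and Lemma~\ref{lem:1d} give
\[
\int_{\A_{r,z}^\sigma}|u_z^\sigma(t+r)-u_z^\sigma(t)|\,dt\le\int_{A_r(\Omega_z^\sigma)}|u_z^\sigma(t+r)-u_z^\sigma(t)|\,dt\le r\,|Du_z^\sigma|(\Omega_z^\sigma).
\]

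\emph{Step 4: reassembly.} Integrate the bound from Step~3 over $z\in\Omega_\sigma$ and apply \eqref{eq:slice-variation}:
\[
G_\sigma(r)\le r\int_{\Omega_\sigma}|Du_z^\sigma|(\Omega_z^\sigma)\,d\Hn(z)=r|D_\sigma u|(\Omega).
\]
Dividing by $r$ gives \eqref{eq:g-upper}.

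The main obstacle is the bookkeeping in Step~1: checking that the slice representatives coincide a.e.\ with the representative for which Lemma~\ref{lem:1d} applies. I would handle this through the Fubini argument above, since null sets in $\Omega$ have null slices for a.e.\ $z$. Everything else follows directly from Lemma~\ref{lem:1d} and the disintegration.
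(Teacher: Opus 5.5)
Your proposal is correct and follows the paper's proof essentially step for step. Like the paper, you slice along $\sigma$ via Fubini, obtain the inclusion $\A_{r,z}^\sigma\subset A_r(\Omega_z^\sigma)$ from $r<\tau d_\Omega(z+t\sigma)<d_\Omega(z+t\sigma)$, apply Lemma~\ref{lem:1d} on each slice, and reassemble through \eqref{eq:slice-variation}; your extra remarks on the openness of $\A_r$ and on the slice integrals not depending on the choice of representative are correct and make explicit details the paper leaves implicit.
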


\begin{proof}
Write $y=z+t\sigma$ with $z\in\Omega_\sigma$ and $t\in\R$. Then using \eqref{eq:slice-Ar} and the Fubini theorem, we have
\begin{align}\label{eq:G-slices}
 G_\sigma(r)
 &=\int_{\Omega_\sigma}\int_{\A_{r,z}^\sigma}
 |u_z^\sigma(t+r)-u_z^\sigma(t)|\,dt\,d\Hn(z).
\end{align}
Recall from Section \ref{one-d-restriction} that $u_z^\sigma\in BV(\Omega_z^\sigma)$ for $\Hn$-a.e. $z\in \Omega_\sigma$.
We claim that
\begin{equation}\label{eq:Arz-inclusion}
 \A_{r,z}^\sigma\subset A_r(\Omega_z^\sigma).
\end{equation}
Indeed, let $t\in\A_{r,z}^\sigma$. Then
\[
 r<\tau d_\Omega(z+t\sigma)<d_\Omega(z+t\sigma).
\]
For every $s\in[t,t+r]$, since
\[
 |z+s\sigma-(z+t\sigma)|=|s-t|\leq  r<d_\Omega(z+t\sigma),
\]
we have $z+s\sigma\in B(z+t\sigma,d_\Omega(z+t\sigma))\subset\Omega$. Hence $s\in\Omega_z^\sigma$ for all $s\in[t,t+r]$, which gives \eqref{eq:Arz-inclusion}.

By \eqref{eq:Arz-inclusion} and Lemma~\ref{lem:1d}, for $\Hn$-a.e. $z\in \Omega_\sigma$, 
\begin{align*}
 \int_{\A_{r,z}^\sigma}|u_z^\sigma(t+r)-u_z^\sigma(t)|\,dt
 \le\int_{A_r(\Omega_z^\sigma)}|u_z^\sigma(t+r)-u_z^\sigma(t)|\,dt
\le r|Du_z^\sigma|(\Omega_z^\sigma).
\end{align*}
Integrating with respect to $z$ over $\Omega_\sigma$, we deduce from \eqref{eq:slice-variation} and \eqref{eq:G-slices} that
\[
 G_\sigma(r)
 \le r\int_{\Omega_\sigma}|Du_z^\sigma|(\Omega_z^\sigma)\,d\Hn(z)
 =r|D_\sigma u|(\Omega),
\]
which proves \eqref{eq:g-upper}. 
\end{proof}

\begin{prop}\label{prop:dir-limit}
Let $u\in BV(\Omega)$ and $\sigma\in\Sn$. Then
\begin{equation}\label{eq:dir-limit}
 \lim_{r\to0^+}\frac{G_\sigma(r)}r
 =|D_\sigma u|(\Omega).
\end{equation}
\end{prop}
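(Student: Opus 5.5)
By Proposition~\ref{prop:dir-upper} we already have $\limsup_{r\to0^+}G_\sigma(r)/r\le|D_\sigma u|(\Omega)$. So it remains to prove the lower bound
\[
\liminf_{r\to0^+}\frac{G_\sigma(r)}{r}\ge|D_\sigma u|(\Omega).
\]
I would obtain it by duality, testing against compactly supported fields. Fix $\varphi\in C_c^1(\Omega)$ with $|\varphi|\le1$, and let $K=\supp\varphi$. Since $K$ is compact, $d_\Omega\ge\delta>0$ on $K$. For $r<\tau\delta$ we then have $K\subset\A_r$, and moreover $K+[0,r]\sigma\subset\Omega$. Because $u\in L^1(\Omega)$, the change of variables $y\mapsto y-r\sigma$ gives
\[
\int_{\Omega}u(y)\,\frac{\varphi(y-r\sigma)-\varphi(y)}{r}\,dy=\int_\Omega \frac{u(y+r\sigma)-u(y)}{r}\,\varphi(y)\,dy.
\]
Here we extend $\varphi$ by zero, and all points involved lie in $\Omega$ for small $r$. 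The modulus of the right-hand side is at most $r^{-1}\int_K|u(y+r\sigma)-u(y)|\,dy\le G_\sigma(r)/r$.

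Next I would let $r\to0^+$. Since $\varphi\in C_c^1$, the difference quotient $(\varphi(y-r\sigma)-\varphi(y))/r$ converges uniformly to $-\sigma\cdot\nabla\varphi(y)$, with supports contained in a fixed compact subset of $\Omega$. As $u\in L^1$, the left-hand side therefore tends to
\[
-\int_\Omega u\,\partial_\sigma\varphi\,dy=\int_\Omega\varphi\,dD_\sigma u.
\]
This yields
\[
\Bigl|\int_\Omega\varphi\,dD_\sigma u\Bigr|\le\liminf_{r\to0^+}\frac{G_\sigma(r)}{r}.
\]

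Finally I would take the supremum over all such $\varphi$. For the finite signed Radon measure $D_\sigma u$ on the open set $\Omega$, this supremum equals $|D_\sigma u|(\Omega)$. This is the scalar analogue of \eqref{eq:BVdual}, and it follows by density of $C_c^1(\Omega)$ in $C_c(\Omega)$ in the uniform norm together with the Riesz representation theorem.

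Combining the lower bound with Proposition~\ref{prop:dir-upper} gives \eqref{eq:dir-limit}. The main point needing care is the localization: the support of $\varphi$ must lie inside $\A_r$ for all small $r$, so that the restriction to $\A_r$ in $G_\sigma$ (the factor $\tau$) costs nothing in the limit. Compactness of the support handles this uniformly, which is why no regularity of $\partial\Omega$ is needed.
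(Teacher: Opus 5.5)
Your proposal is correct and follows essentially the same route as the paper. The upper bound comes from Proposition~\ref{prop:dir-upper}; the lower bound comes from testing the difference quotient against $\varphi\in C_c^1(\Omega)$ with $\supp\varphi\subset\A_r$ for small $r$, moving the translation onto $\varphi$, letting $r\to0^+$, and taking the supremum via the dual characterization of $|D_\sigma u|(\Omega)$. The only differences are cosmetic: you pass to the limit using uniform convergence of the difference quotients rather than dominated convergence, and you justify the scalar duality formula explicitly, which the paper simply invokes.
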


\begin{proof}
It follows immediately from Proposition~\ref{prop:dir-upper} that
\begin{equation}\label{eq:limsup-dir}
 \limsup_{r\to0^+}\frac{G_\sigma(r)}r\le |D_\sigma u|(\Omega).
\end{equation}
It remains to prove the reverse inequality.

Fix $\varphi\in C_c^1(\Omega)$ with $|\varphi|\le1$, and let
\[
 K:=\supp\varphi,\qquad a:=\dist(K,\partial\Omega)>0.
\]
If $0<r<\tau a$, then $K\subset\A_r$. Choose $0<\rho<\tau a/2$. For $0<r<\rho$, both $K$ and $K+r\sigma$ are compactly contained in $\Omega$.

Define
\begin{equation*}\label{eq:Jr}
 J_r:=\int_K\frac{u(y+r\sigma)-u(y)}r\,\varphi(y)\,dy.
\end{equation*}
By a change of variable with $x=y+r\sigma$, we have
\[
 \int_Ku(y+r\sigma)\varphi(y)\,dy
 =\int_{K+r\sigma}u(x)\varphi(x-r\sigma)\,dx.
\]
Since $\varphi(x-r\sigma)$ is supported in $K+r\sigma$ and $\varphi(x)$ is supported in $K$, both of which lie in $\Omega$ for  $0<r<\rho$, we can rewrite
\begin{equation}\label{eq:Jr-change}
 J_r=\int_\Omega u(x)\frac{\varphi(x-r\sigma)-\varphi(x)}r\,dx.
\end{equation}

Set
\[
 q_r(x):=\frac{\varphi(x-r\sigma)-\varphi(x)}r.
\]
Since $\varphi\in C_c^1(\Omega)$, 
we have $q_r(x)\to-\partial_\sigma\varphi(x)$  as $r\to0^+$ for every point $x\in \Omega$. Moreover, for $0<r<\rho$, we have
\[
 \supp q_r\subset \{x\in\Omega:\dist(x,K)\le\rho\}\Subset\Omega,
\]
and for every point $x\in \Omega$,
\[
 |q_r(x)|\le\|\partial_\sigma\varphi\|_{L^\infty(\Omega)}<\infty.
\]
Since $u\in L^1(\Omega)$,  applying the Lebesgue dominated convergence theorem in \eqref{eq:Jr-change} gives
\begin{equation}\label{eq:Jr-limit}
 \lim_{r\to0^+}J_r
 =-\int_\Omega u\,\partial_\sigma\varphi\,dx
 =\int_\Omega\varphi\,dD_\sigma u.
\end{equation}

On the other hand, because $K\subset\A_r$ and $|\varphi|\le1$, we obtain
\begin{align*}
 |J_r|
 \le\frac1r\int_K|u(y+r\sigma)-u(y)|\,dy
 &\le\frac{G_\sigma(r)}r.
\end{align*}
Passing to the lower limit and using \eqref{eq:Jr-limit}, we get
\[
 \left|\int_\Omega\varphi\,dD_\sigma u\right|
 \le\liminf_{r\to0^+}\frac{G_\sigma(r)}r.
\]
Taking the supremum over all $\varphi\in C_c^1(\Omega)$ with $|\varphi|\le1$, by the dual characterization of the total variation of the signed measure $D_\sigma u$, we get
\begin{equation*}\label{eq:liminf-dir} 
 |D_\sigma u|(\Omega)
 \le\liminf_{r\to0^+}\frac{G_\sigma(r)}r.
\end{equation*}
Combining this with \eqref{eq:limsup-dir}  proves \eqref{eq:dir-limit}.
\end{proof}

We now turn to the improved fractional Sobolev energy  $\E_{\lambda,\tau}^{\rho}(u,\Omega)$. Let $u\in L^1(\Omega)$,  $0<\tau<1$, and  $\lambda>0$. By a change of variable with $h=x-y$, we have
\[
\E_{\lambda,\tau}^{\rho}(u,\Omega)
=\int_\Omega\int_{|h|<\tau d_\Omega(y)}
\frac{|u(y+h)-u(y)|}{|h|}\rho_\lambda(h)\,dh\,dy.
\]
Since $\rho_\lambda$ is radial, applying the polar coordinates $h=r\sigma$ and using the Fubini theorem, we obtain
\begin{equation}\label{eq:polar-energy}
	\E_{\lambda,\tau}^{\rho}(u,\Omega)
	=\int_{\Sn}\int_0^\infty
	\rho_\lambda(r\sigma)r^{n-1}\frac{G_\sigma(r)}{r}\,dr\,d\Hn(\sigma).
\end{equation}

\begin{prop}\label{prop:Ele}
	Let $u\in BV(\Omega)$. Then
	$$\E_{\lambda,\tau}^{\rho}(u,\Omega)\leq K_{1, n}|Du|(\Omega).$$
\end{prop}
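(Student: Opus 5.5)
The plan is to start from the polar representation \eqref{eq:polar-energy} of $\E_{\lambda,\tau}^{\rho}(u,\Omega)$ and bound the inner radial integral pointwise in $\sigma$.

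For each fixed $\sigma\in\Sn$, Proposition~\ref{prop:dir-upper} gives $0\le G_\sigma(r)/r\le|D_\sigma u|(\Omega)$ for every $r>0$. This bound is uniform in $r$. So, since $\rho_\lambda\ge0$, we may pull it out of the $r$-integral and then use the normalization \eqref{unit}, that is, $\int_0^\infty\rho_\lambda(r\sigma)r^{n-1}\,dr=1/\Hn(\Sn)$ (a consequence of \eqref{eq:kernel-mass} and radiality). This yields
\[
\int_0^\infty\rho_\lambda(r\sigma)r^{n-1}\frac{G_\sigma(r)}{r}\,dr\le\frac{|D_\sigma u|(\Omega)}{\Hn(\Sn)}.
\]

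Next we integrate over $\sigma\in\Sn$ and apply the spherical identity \eqref{eq:sphere-var}, $\int_{\Sn}|D_\sigma u|(\Omega)\,d\Hn(\sigma)=K_n|Du|(\Omega)$. Together with $K_n=\Hn(\Sn)K_{1,n}$, this gives exactly $K_{1,n}|Du|(\Omega)$.

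The only point needing care is to justify \eqref{eq:polar-energy} itself, that is, to apply Fubini/Tonelli to a nonnegative measurable integrand. Note that $G_\sigma(r)=0$ is not forced for large $r$: $G_\sigma(r)$ is an integral over $\A_r$, which is empty once $r\ge\tau\sup d_\Omega$. Either way, the integrand is nonnegative and the uniform bound holds for all $r$, so Tonelli applies without integrability issues. Measurability of $(\sigma,r)\mapsto G_\sigma(r)$ follows from the measurability of $(y,r,\sigma)\mapsto|u(y+r\sigma)-u(y)|\chi_{\{r<\tau d_\Omega(y)\}}$, which in turn follows from the continuity of $d_\Omega$. This measurability step is routine and is the only mild obstacle; the rest is a direct combination of the earlier results.
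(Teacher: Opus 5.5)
Your proposal is correct and follows the paper's proof essentially verbatim: insert the uniform bound $G_\sigma(r)/r\le|D_\sigma u|(\Omega)$ from Proposition~\ref{prop:dir-upper} into \eqref{eq:polar-energy}, use the normalization \eqref{unit}, and conclude with \eqref{eq:sphere-var} and $K_n=\Hn(\Sn)K_{1,n}$. Your side remark about $G_\sigma(r)$ for large $r$ is garbled: $\A_r$ becomes empty for large $r$ only when $\sup_\Omega d_\Omega<\infty$. It plays no role, however, since the bound holds for every $r>0$.
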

\begin{proof}
	Since $\rho_\lambda$ is radial and satisfies \eqref{eq:kernel-mass}, we know that \eqref{unit} holds for every $\sigma\in \Sn$, i.e.,
	$$\int_{0}^{\infty} \rho(r\sigma)r^{n-1}\, dr =\frac{1}{\Hn(\Sn)}.$$
By applying Proposition~\ref{prop:dir-upper} to \eqref{eq:polar-energy}, we deduce from \eqref{eq:sphere-var} that
\begin{align*}
\E_{\lambda,\tau}^{\rho}(u,\Omega)&\leq  \int_{\Sn}\int_0^\infty 	\rho_\lambda(r\sigma)r^{n-1} |D_\sigma u|(\Omega)\, dr\,d\Hn(\sigma)\\
&=\frac{1}{\Hn(\Sn)}  \int_{\Sn}|D_\sigma u|(\Omega)\,d\Hn(\sigma)=K_{1, n}|Du|(\Omega).
\end{align*}	
\end{proof}

\begin{prop}\label{prop:smallscale}
If $u\in BV(\Omega)$, then for every $R>0$,
\[
 \lim_{\lambda\to0^+}
 \int_{\Sn}\int_0^R
 \rho_\lambda(r\sigma)r^{n-1}\frac{G_\sigma(r)}{r}\,dr\,d\Hn(\sigma)
 =K_{1,n}|Du|(\Omega).
\]
\end{prop}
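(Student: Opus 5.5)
The plan is to apply Lemma~\ref{lem:abelian} direction by direction and then integrate over the sphere by dominated convergence. Fix $R>0$ and $\sigma\in\Sn$, and set $g_\sigma(r):=G_\sigma(r)/r$ for $r\in(0,R)$.

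First, the hypotheses of Lemma~\ref{lem:abelian} have to be checked for $g_\sigma$.
\begin{itemize}
\item Measurability in $r$ comes from Fubini/Tonelli applied to $(r,y)\mapsto |u(y+r\sigma)-u(y)|\mathbf 1_{\{r<\tau d_\Omega(y)\}}$, which is jointly measurable.
\item Proposition~\ref{prop:dir-upper} gives $0\le g_\sigma(r)\le M_\sigma:=|D_\sigma u|(\Omega)\le |Du|(\Omega)<\infty$.
\item Proposition~\ref{prop:dir-limit} gives $\lim_{r\to0^+}g_\sigma(r)=A_\sigma:=|D_\sigma u|(\Omega)$, so indeed $0\le A_\sigma\le M_\sigma$.
\end{itemize}
Lemma~\ref{lem:abelian} then yields, for every fixed $\sigma$,
\[
\lim_{\lambda\to0^+}\int_0^R\rho_\lambda(r\sigma)r^{n-1}g_\sigma(r)\,dr=\frac{|D_\sigma u|(\Omega)}{\Hn(\Sn)}.
\]
By \eqref{eq:abelian-bounds}, this inner integral is also bounded by $|Du|(\Omega)/\Hn(\Sn)$ uniformly in $\lambda$ and $\sigma$.

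Second, I integrate over $\Sn$. The uniform bound is a constant, hence integrable on the finite measure space $(\Sn,\Hn)$. Dominated convergence therefore lets me pass to the limit inside the $\sigma$-integral, and the limit equals
\[
\frac1{\Hn(\Sn)}\int_{\Sn}|D_\sigma u|(\Omega)\,d\Hn(\sigma)=\frac{K_n}{\Hn(\Sn)}|Du|(\Omega)=K_{1,n}|Du|(\Omega),
\]
using \eqref{eq:sphere-var} and $K_n=\Hn(\Sn)K_{1,n}$.

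The main technical point is measurability: I need the inner integral to be a measurable function of $\sigma$, and $g_\sigma$ to be measurable in $r$. Both follow from joint measurability of $(r,\sigma,y)\mapsto|u(y+r\sigma)-u(y)|\mathbf 1_{\{r<\tau d_\Omega(y)\}}$ together with Tonelli, which is the same justification already used to derive \eqref{eq:polar-energy}.

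The applicability of Lemma~\ref{lem:abelian} also needs one check: the lemma asks for $g(r)\to A$ as $r\to0^+$, where $A$ does not depend on $\lambda$. This holds here, since $g_\sigma$ is built from $u$ alone. So the essential analytic input is entirely Propositions~\ref{prop:dir-upper} and \ref{prop:dir-limit}, and the present step is a routine combination of them.
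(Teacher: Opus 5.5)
Your proposal is correct and follows the paper's proof essentially verbatim: you apply Lemma~\ref{lem:abelian} to $g_\sigma=G_\sigma(r)/r$, using the bound from Proposition~\ref{prop:dir-upper} and the limit from Proposition~\ref{prop:dir-limit}, and then use dominated convergence over $\Sn$ together with \eqref{eq:sphere-var}. The only differences are cosmetic: you dominate by the constant $|Du|(\Omega)/\Hn(\Sn)$ instead of $|D_\sigma u|(\Omega)/\Hn(\Sn)$, and you make the measurability checks explicit, which the paper leaves implicit.
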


\begin{proof}
Fix $\sigma\in\Sn$. Propositions~\ref{prop:dir-upper} and \ref{prop:dir-limit} give
\[
 0\le \frac{G_\sigma(r)}r\le |D_\sigma u|(\Omega)\quad \text{and}
 \quad
 \lim_{r\to0^+}\frac{G_\sigma(r)}r=|D_\sigma u|(\Omega).
\]
Applying Lemma~\ref{lem:abelian} to $g(r)=G_\sigma(r)/r$,  we obtain
\[
 \lim_{\lambda\to0^+}\int_0^R
 \rho_\lambda(r\sigma)r^{n-1}\frac{G_\sigma(r)}r\,dr
 =\frac{|D_\sigma u|(\Omega)}{\Hn(\Sn)}
\]
and 
\[
 \int_0^R
 \rho_\lambda(r\sigma)r^{n-1}\frac{G_\sigma(r)}r\,dr
 \le \frac{|D_\sigma u|(\Omega)}{\Hn(\Sn)}.
\]
Hence \eqref{eq:sphere-var} and the Lebesgue dominated convergence theorem give
\[
 \lim_{\lambda\to0^+}
 \int_{\Sn}\int_0^R
 \rho_\lambda(r\sigma)r^{n-1}\frac{G_\sigma(r)}r\,dr\,d\Hn(\sigma)
 =\frac{K_n}{\Hn(\Sn)}|Du|(\Omega)
 =K_{1,n}|Du|(\Omega),
\]
which completes the proof.
\end{proof}

Based on the previous preparations, we can now begin to prove Theorem~\ref{thm:main} and Corollary~\ref{cor:main}.
\begin{proof}[Proof of Theorem~\ref{thm:main}]
Fix $R>0$. By \eqref{eq:polar-energy},
\begin{align*}
 \E_{\lambda,\tau}^{\rho}(u,\Omega)
 &=\int_{\Sn}\int_0^R
 \rho_\lambda(r\sigma)r^{n-1}\frac{G_\sigma(r)}r\,dr\,d\Hn(\sigma)\\
 &\quad+\int_{\Sn}\int_R^\infty
 \rho_\lambda(r\sigma)r^{n-1}\frac{G_\sigma(r)}r\,dr\,d\Hn(\sigma).
\end{align*}
The first term converges to $K_{1,n}|Du|(\Omega)$ by Proposition~\ref{prop:smallscale}.

For the second term, since $u\in BV(\Omega)\subset L^1(\Omega)$, for every $r>0$ and $\sigma\in\Sn$,
\begin{equation}\label{G-L1}
 G_\sigma(r)
 \le\int_{\A_r}|u(y+r\sigma)|\,dy+\int_{\A_r}|u(y)|\,dy
 \le2\|u\|_{L^1(\Omega)}.
\end{equation}
Hence
\begin{align*}
0&\le\int_{\Sn}\int_R^\infty
 \rho_\lambda(r\sigma)r^{n-1}\frac{G_\sigma(r)}r\,dr\,d\Hn(\sigma)\\
&\le \frac{2\|u\|_{L^1(\Omega)}}{R}
 \Hn(\Sn)\int_R^\infty\rho_\lambda(r\sigma)r^{n-1}\,dr\\
&=\frac{2\|u\|_{L^1(\Omega)}}{R}
 \int_{|h|>R}\rho_\lambda(h)\,dh,
\end{align*}
which tends to zero as $\lambda\to0^+$ by \eqref{eq:kernel-tail}. Therefore
\[
 \lim_{\lambda\to0^+}\E_{\lambda,\tau}^{\rho}(u,\Omega)
 =K_{1,n}|Du|(\Omega).
\]
This proves \eqref{eq:main} and completes the proof of Theorem~\ref{thm:main}.
\end{proof}

\begin{proof}[Proof of Corollary~\ref{cor:main}]
	Let $u\in BV(\Omega)$ and define
	\begin{equation*}\label{eq:energy-kernel}
		\E_{s,\tau}(u,\Omega)
		:=\int_\Omega\int_{B(x, \tau d_\Omega(x))}
		\frac{|u(x)-u(y)|}{|x-y|^{n+s}}\,dy\,dx, \quad 0<s<1.
	\end{equation*}
By the same reasoning as \eqref{eq:polar-energy}, we can rewrite
$$\E_{s,\tau}(u,\Omega)=\int_{\Sn}\int_0^\infty r^{-1-s}G_\sigma(r)\,dr\,d\Hn(\sigma).$$
Fix $R>0$, we have
\begin{align*}
	\E_{s,\tau}(u,\Omega)
	&=\int_{\Sn}\int_0^R r^{-1-s}G_\sigma(r)\,dr\,d\Hn(\sigma)\\
	&\quad +\int_{\Sn}\int_R^\infty r^{-1-s}G_\sigma(r)\,dr\,d\Hn(\sigma).
\end{align*}

Since the radial mollifiers $\{\rho_\lambda\}_{\lambda>0}$  defined by
$$\rho_{\lambda}(x)=\frac{\lambda}{R^\lambda\Hn(\Sn)} \frac{\mathbf1_{[0, R]}(|x|)}{|x|^{n-\lambda}}$$
satisfy \eqref{eq:kernel-mass} and \eqref{eq:kernel-tail}, by taking $\lambda=1-s$ and letting $s\to 1^{-}$, we deduce from Proposition~\ref{prop:smallscale} that
$$\lim_{s\to1^-} \frac{1-s}{R^{1-s}\Hn(\Sn)} \int_{\Sn}\int_0^R r^{-1-s}G_\sigma(r)\,dr\,d\Hn(\sigma)=K_{1,n}|Du|(\Omega).$$
Therefore,
\begin{equation}\label{eq:first-term}
\lim_{s\to1^-} {(1-s)} \int_{\Sn}\int_0^R r^{-1-s}G_\sigma(r)\,dr\,d\Hn(\sigma)=K_{n}|Du|(\Omega).
\end{equation}
Moreover, since \eqref{G-L1} holds for $u$, we obtain
\[
0\le(1-s)\int_{\Sn}\int_R^\infty r^{-1-s}G_\sigma(r)\,dr\,d\Hn(\sigma)
\le2\Hn(\Sn)\frac{1-s}{s}R^{-s}\|u\|_{L^1(\Omega)},
\]
which tends to zero as $s\to1^-$. Combining this with \eqref{eq:first-term} yields
$$\lim_{s\to1^{-}} (1-s) \E_{s,\tau}(u,\Omega)=K_{n}|Du|(\Omega),$$
which proves \eqref{eq:main-1} and completes the proof of Corollary ~\ref{cor:main}.
\end{proof}

\section{\texorpdfstring{$\Gamma$-convergence}{Gamma-convergence} and characterization of BV functions}\label{sec:gamma}
In this section, we prove Theorem~\ref{thm:gamma} and
Corollary~\ref{cor:characterization}. We begin with a limit result for smooth functions.

\begin{lem}\label{lem:smoothtrunc}
Let $\Omega\subset\R^n$ be bounded and open, let $R>0$, and let
$f\in C^1(\R^n)\cap W^{1,\infty}(\R^n)$. Then
\begin{equation}\label{eq:smoothtrunc}
 \lim_{\lambda\to0^+}
 \int_{\Omega}\int_{\left\{y\in \Omega:|x-y|<R\right\}}
 \frac{|f(x)-f(y)|}{|x-y|}\rho_\lambda(x-y)\,dy\,dx
 =K_{1,n}\int_\Omega|\nabla f(x)|\,dx.
\end{equation}
\end{lem}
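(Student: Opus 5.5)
We prove the two inequalities separately. Write $h=y-x$ and
\[
I_\lambda:=\int_\Omega\int_{\{h:\,x+h\in\Omega,\ |h|<R\}}\frac{|f(x+h)-f(x)|}{|h|}\rho_\lambda(h)\,dh\,dx .
\]

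\emph{Upper bound.} The plan is to drop the constraint $x+h\in\Omega$ and to use a modulus of continuity of $\nabla f$ on a compact set. Since $\Omega$ is bounded and $f\in C^1$, the gradient $\nabla f$ is uniformly continuous on the compact set $\overline{\Omega}+\overline{B(0,R)}$; let $\omega$ be its modulus of continuity there. For $x\in\Omega$ and $|h|<R$, the mean value theorem gives
\[
|f(x+h)-f(x)|\le |h|\bigl(|\nabla f(x)\cdot \tfrac{h}{|h|}|+\omega(|h|)\bigr).
\]
Split the $h$-integral at a small $\delta$:
\begin{itemize}
\item On $\{|h|\ge\delta\}$, bound the integrand by $\|\nabla f\|_\infty\rho_\lambda(h)$. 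This piece is at most $\|\nabla f\|_\infty\mathcal L^n(\Omega)\int_{|h|>\delta}\rho_\lambda$, which tends to $0$ by \eqref{eq:kernel-tail}.
\item On $\{|h|<\delta\}$, pass to polar coordinates $h=r\sigma$ and use \eqref{unit} together with the identity $\frac{1}{\Hn(\Sn)}\int_{\Sn}|\nabla f(x)\cdot\sigma|\,d\Hn(\sigma)=K_{1,n}|\nabla f(x)|$.
\end{itemize}
This yields
\[
\limsup_{\lambda\to0^+} I_\lambda\le K_{1,n}\int_\Omega|\nabla f|\,dx+\omega(\delta)\mathcal L^n(\Omega),
\]
and letting $\delta\to0$ gives the upper bound.

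\emph{Lower bound.} Restrict to a compact set $K\subset\Omega$ and set $a=\dist(K,\partial\Omega)$. For $x\in K$ and $|h|<\min(a,R)$, the constraint $x+h\in\Omega$ holds automatically. The same Taylor estimate gives, pointwise in $x$, the bound $|f(x+h)-f(x)|\ge |h|\bigl(|\nabla f(x)\cdot\sigma|-\omega(|h|)\bigr)$. Applying Lemma~\ref{lem:abelian} with $g(r)=|\nabla f(x)\cdot\sigma|-\omega(r)$, or simply the elementary estimate as above, we get
\[
\liminf_{\lambda\to0^+} I_\lambda\ge K_{1,n}\int_K|\nabla f|\,dx .
\]
Here we have discarded the nonnegative contributions from $\Omega\setminus K$ and from large $|h|$. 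Letting $K\uparrow\Omega$ and using $|\nabla f|\in L^1(\Omega)$ (by boundedness) closes the argument.

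Alternatively, one could run the proof through $G_\sigma$ and Proposition~\ref{prop:smallscale}, but the direct Taylor argument is cleaner.

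The only delicate point is ensuring uniformity of the Taylor remainder up to $\partial\Omega$. This is exactly why we take $\Omega$ bounded and use uniform continuity of $\nabla f$ on the compact neighborhood $\overline{\Omega}+\overline{B(0,R)}$. Note that $\nabla f$ need only be continuous, not uniformly continuous globally, so compactness is essential here. The interchange of limits is handled by the dominations $\|\nabla f\|_\infty\rho_\lambda$ and \eqref{unit}.
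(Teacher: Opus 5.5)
Your proof is correct, but it takes a different route from the paper's.

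\textbf{The paper's route.} The paper does not split into $\limsup$ and $\liminf$. It writes the integral in polar coordinates and fixes $(x,\sigma)$. On $(0,R)$ it sets $q_{x,\sigma}(r)=\mathbf 1_\Omega(x+r\sigma)\,|f(x+r\sigma)-f(x)|/r$. Because $\Omega$ is open, the indicator equals $1$ for small $r$, so $q_{x,\sigma}(r)\to|\nabla f(x)\cdot\sigma|$; also $0\le q_{x,\sigma}\le\|\nabla f\|_{L^\infty(\R^n)}$. Lemma~\ref{lem:abelian} then gives both the inner limit and a uniform bound. Dominated convergence on the finite-measure set $\Omega\times\Sn$ finishes the proof. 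The boundary constraint $y\in\Omega$ is handled pointwise, with no inner compact sets, no exhaustion, and no modulus of continuity.

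\textbf{Your route.} You replace pointwise convergence plus dominated convergence by uniform first-order Taylor estimates. For the upper bound you drop the constraint and use the modulus $\omega$ of $\nabla f$ on $\overline\Omega+\overline{B(0,R)}$. For the lower bound you work on $K\Subset\Omega$, where the constraint holds automatically for small $|h|$, and then exhaust. This gives an explicit error bound for each $\lambda$, of the form $\omega(\delta)\mathcal L^n(\Omega)+\|\nabla f\|_{L^\infty(\R^n)}\mathcal L^n(\Omega)\int_{|h|\ge\delta}\rho_\lambda$. The cost is a genuine use of uniform continuity of $\nabla f$ on compacts. The paper's argument only needs differentiability of $f$ at each $x$ plus the global Lipschitz bound, so it would run for any Lipschitz $f$ via Rademacher's theorem.

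\textbf{Two small corrections.}
\begin{enumerate}
\item Your closing claim that compactness is essential is not accurate. Boundedness of $\Omega$ is needed only so that the constant dominating function is integrable, which is also where your factors $\mathcal L^n(\Omega)$ come from.
\item Lemma~\ref{lem:abelian} is stated for $g\ge 0$, and $|\nabla f(x)\cdot\sigma|-\omega(r)$ can be negative. Either apply the lemma to its positive part, which still tends to $|\nabla f(x)\cdot\sigma|$, or use the direct estimate via \eqref{unit} that you also mention; the latter works as written.
\end{enumerate}
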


\begin{proof}
By using polar coordinates with $y=x+r\sigma$, we can rewrite the double integral
in the left hand side of \eqref{eq:smoothtrunc} as
\[
 \int_\Omega\int_{\Sn}\int_0^R
 \mathbf1_\Omega(x+r\sigma)
 \frac{|f(x+r\sigma)-f(x)|}{r}
 \rho_\lambda(r\sigma)r^{n-1}\,dr\,d\Hn(\sigma)\,dx.
\]
For $x\in\Omega$ and $\sigma\in\Sn$, define
\[
 q_{x,\sigma}(r):=\mathbf1_\Omega(x+r\sigma)
 \frac{|f(x+r\sigma)-f(x)|}{r}.
\]
Since $\Omega$ is open, for each fixed $x$ the characteristic function is equal
to one for all sufficiently small $r$. Hence
\[
 \lim_{r\to0^+}q_{x,\sigma}(r)=|\nabla f(x)\cdot\sigma|.
\]
Moreover,
\[
 0\le q_{x,\sigma}(r)\le\|\nabla f\|_{L^\infty(\R^n)}=:M.
\]
Applying Lemma~\ref{lem:abelian} to $g(r)=q_{x,\sigma}(r)$ yields
\[
 \lim_{\lambda\to0^+}\int_0^R
 \rho_\lambda(r\sigma)r^{n-1}q_{x,\sigma}(r)\,dr
 =\frac{|\nabla f(x)\cdot\sigma|}{\Hn(\Sn)}
\]
and
\[
 \int_0^R
 \rho_\lambda(r\sigma)r^{n-1}q_{x,\sigma}(r)\,dr
 \le \frac{M}{\Hn(\Sn)}.
\]
Since $\Omega$ has finite measure and $\Sn$ has finite surface measure, the
Lebesgue dominated convergence theorem gives
\begin{align*}
 &\lim_{\lambda\to0^+}
 \int_{\Omega}\int_{\{y\in\Omega:|x-y|<R\}}
 \frac{|f(x)-f(y)|}{|x-y|}\rho_\lambda(x-y)\,dy\,dx\\
 &=\frac1{\Hn(\Sn)}
 \int_\Omega\int_{\Sn}|\nabla f(x)\cdot\sigma|\,d\Hn(\sigma)\,dx
 =K_{1,n}\int_\Omega|\nabla f(x)|\,dx.
\end{align*}
This gives \eqref{eq:smoothtrunc} and completes the proof.
\end{proof}

\begin{lem}\label{lem:mollify}
Let $\lambda_j\to0^+$ as $j\to\infty$ and let $u_j,u\in L^1(\Omega)$ satisfy
\[
 u_j\to u\qquad\text{in}\quad L^1_{\mathrm{loc}}(\Omega).
\]
Then, for every bounded open set $U\Subset\Omega$,
\begin{equation}\label{eq:local-liminf}
 K_{1,n}|Du|(U)
 \le \liminf_{j\to\infty}\E_{\lambda_j,\tau}^{\rho}(u_j,\Omega).
\end{equation}
\end{lem}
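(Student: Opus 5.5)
Fix a bounded open set $U\Subset\Omega$ and assume, passing to a subsequence, that the right-hand side of \eqref{eq:local-liminf} is a finite limit $L$. We regularize by mollification, so that Lemma~\ref{lem:smoothtrunc} can be applied to smooth functions, and then pass to the limit in the mollification parameter using lower semicontinuity of the total variation.

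\emph{Choice of parameters.} Set $a:=\dist(U,\partial\Omega)>0$ and fix $0<\varepsilon<a/4$. Put $U_\varepsilon:=\{x:\dist(x,U)<\varepsilon\}\Subset\Omega$. Since $d_\Omega\ge a/2$ on $U_\varepsilon$ when $\varepsilon\le a/2$, we may take $R:=\tau a/4$. Then, for $x\in U_\varepsilon$, every $y$ with $|x-y|<R$ lies in $B(x,\tau d_\Omega(x))$. Shrinking $\varepsilon$ further if necessary, we also arrange that $x-z\in U_\varepsilon$ and $B(x-z,R)\subset B(x-z,\tau d_\Omega(x-z))$ whenever $x\in U$ and $|z|<\varepsilon$.

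\emph{Mollification and Jensen.} Let $\eta_\varepsilon$ be a standard mollifier supported in $B(0,\varepsilon)$, and set $v_j:=(u_j\mathbf 1_{U_{2\varepsilon}})*\eta_\varepsilon$ and $v:=(u\mathbf 1_{U_{2\varepsilon}})*\eta_\varepsilon$. For $x,y\in U$ we have $v_j(x)-v_j(y)=\int\eta_\varepsilon(z)\,(u_j(x-z)-u_j(y-z))\,dz$. By Fubini and a translation,
\[
\int_U\int_{\{y\in U:|x-y|<R\}}\frac{|v_j(x)-v_j(y)|}{|x-y|}\rho_{\lambda_j}(x-y)\,dy\,dx\le\int\eta_\varepsilon(z)\int_{U-z}\int_{\{|x'-y'|<R\}}\cdots\,dz\le\E^\rho_{\lambda_j,\tau}(u_j,\Omega),
\]
where the last step uses the inclusion from the choice of parameters.

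\emph{Passing $j\to\infty$ at fixed $\varepsilon$.} Since $u_j\to u$ in $L^1(U_{2\varepsilon})$, we get $v_j\to v$ in $C^1(\overline U)$ uniformly, with derivatives bounded by $C_\varepsilon\|u_j-u\|_{L^1(U_{2\varepsilon})}$. Hence
\[
\frac{|(v_j-v)(x)-(v_j-v)(y)|}{|x-y|}\le\|\nabla(v_j-v)\|_\infty\to0,
\]
and, because $\int\rho_\lambda=1$, the truncated energy of $v_j$ differs from that of $v$ by at most $\mathcal L^n(U)\,\|\nabla(v_j-v)\|_\infty\to0$. The function $v$ extends to an element of $C^1\cap W^{1,\infty}(\R^n)$ (extend by multiplying with a cutoff that equals $1$ near $\overline U$). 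Lemma~\ref{lem:smoothtrunc} applied on the bounded set $U$ then gives
\[
K_{1,n}\int_U|\nabla v|\,dx\le L.
\]

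\emph{Passing $\varepsilon\to0$.} On $U$ we have $v=u*\eta_\varepsilon\to u$ in $L^1(U)$. Lower semicontinuity of the total variation on open sets gives
\[
K_{1,n}|Du|(U)\le\liminf_{\varepsilon\to0}K_{1,n}\int_U|\nabla v|\,dx\le L.
\]
This also covers the case $|Du|(U)=\infty$, where $Du$ is understood via \eqref{eq:BVdual}, and shows that $u\in BV(U)$ whenever $L<\infty$.

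The main obstacle is the bookkeeping in the Jensen step: checking that the translated pairs $(x-z,y-z)$ remain inside the admissible region $\{|h|<\tau d_\Omega\}$, which is why $\varepsilon$ and $R$ must be chosen relative to $a$ and $\tau$. Everything else is routine.
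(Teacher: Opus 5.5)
Your proposal is correct and follows essentially the same route as the paper's proof. You mollify, use Jensen/Fubini together with the translation $X=x-z$, $Y=y-z$ to bound the truncated energy on $U$ by $\E_{\lambda_j,\tau}^{\rho}(u_j,\Omega)$, let $j\to\infty$ at fixed $\varepsilon$ via uniform convergence of gradients and Lemma~\ref{lem:smoothtrunc}, and then let $\varepsilon\to0$ by lower semicontinuity. The differences are cosmetic, with one small fix worth making:
\begin{itemize}
\item Mollifying $u_j\mathbf 1_{U_{2\varepsilon}}$ instead of the zero extension makes your gradient estimate global, which is slightly cleaner.
\item Your $v$ already lies in $C^\infty\cap W^{1,\infty}(\R^n)$, so the cutoff remark is unnecessary.
\item You should take $a:=\min\{1,\dist(U,\partial\Omega)\}$, as the paper does, so that $R$ stays finite when $\Omega=\R^n$.
\end{itemize}
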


\begin{proof}
Set
\[
 L:=\liminf_{j\to\infty}\E_{\lambda_j,\tau}^{\rho}(u_j,\Omega).
\]
There is nothing to prove if $L=+\infty$. If $L<+\infty$,  passing to a subsequence without relabeling for brevity, we may assume
\begin{equation}\label{limit-le-L}
\E_{\lambda_j,\tau}^{\rho}(u_j,\Omega)\rightarrow L\quad \text{as}\ j\to\infty.
\end{equation}

Let $U\Subset\Omega$ be a bounded open set, and set
\[
 d:=\min\{1,\dist(U,\partial\Omega)\}>0,
 \qquad
 R:=\frac{\tau d}{8}.
\]
Choose a nonnegative standard mollifier
$\eta\in C_c^\infty(B(0,1))$ with $\int_{\R^n}\eta\,dx=1$, and set
\[
 \eta_\varepsilon(x):=\varepsilon^{-n}\eta(x/\varepsilon).
\]
Extend $u_j$ and $u$ by zero outside $\Omega$, denoting the extensions by
$\widetilde u_j$ and $\widetilde u$, respectively. For $0<\varepsilon<d/8$, define
\[
 u_{j,\varepsilon}:=\eta_\varepsilon*\widetilde u_j,
 \qquad
 u_\varepsilon:=\eta_\varepsilon*\widetilde u.
\]
Then by the standard properties of mollification (cf. \cite[Theorem 4.1]{EvansGariepy}), we have
$$u_{j,\varepsilon}, u_\varepsilon\in C^\infty(\R^n)\cap W^{1,\infty}(\R^n).$$
Moreover, for $x,y\in U$, 
\[
 |u_{j,\varepsilon}(x)-u_{j,\varepsilon}(y)|
 \le\int_{B(0,\varepsilon)}\eta_\varepsilon(z)
 |u_j(x-z)-u_j(y-z)|\,dz.
\]
Multiplying by
\[
\mathbf1_{\{y\in U: |x-y|<R\}}\frac{\rho_{\lambda_j}(x-y)}{|x-y|}
\]
and integrating over $U\times U$, the Fubini theorem yields
\begin{align}\label{Fubini-section4}
	&\int_U\int_{\{y\in U: |x-y|<R\}}
	\frac{|u_{j,\varepsilon}(x)-u_{j,\varepsilon}(y)|}{|x-y|}
	\rho_{\lambda_j}(x-y)\,dy\,dx\notag\\
	\le &\int_{B(0,\varepsilon)}\eta_\varepsilon(z)
	\int_U\int_{\{y\in U: |x-y|<R\}}
	\frac{|u_{j}(x-z)-u_{j}(y-z)|}{|x-y|}
	\rho_{\lambda_j}(x-y)\,dy\,dx\,dz.
\end{align}

If $z\in B(0,\varepsilon)$ and $|x-y|<R$, then, with
$X=x-z$ and $Y=y-z$,
\[
 d_\Omega(X)\ge d-\varepsilon>\frac{7d}{8}
\]
and hence
\[
 |X-Y|=|x-y|<\frac{\tau d}{8}<\tau d_\Omega(X).
\]
Therefore, by using the fact that $\int_{B(0, \varepsilon)}\eta_\varepsilon=1$ and applying the change of variables $X=x-z$, $Y=y-z$ to \eqref{Fubini-section4},  we obtain
\begin{equation}\label{eq:seq-moll-energy}
 \int_U\int_{\{y\in U:|x-y|<R\}}
 \frac{|u_{j,\varepsilon}(x)-u_{j,\varepsilon}(y)|}{|x-y|}
 \rho_{\lambda_j}(x-y)\,dy\,dx
 \le \E_{\lambda_j,\tau}^{\rho}(u_j,\Omega).
\end{equation}

Let
\[
 W:=\{x\in\Omega:\dist(x,U)<d/2\}.
\]
Since  $\nabla u_{j, \varepsilon}=\nabla\eta_\varepsilon* u_{j}$ and $\nabla u_{j}=\nabla\eta_\varepsilon* u$, by using the facts that  $R+\varepsilon<d/4$ and  $u_j\to u$ in  $L^1(W)$, we have
\begin{equation}\label{nabla-0}
\|\nabla u_{j,\varepsilon}-\nabla u_\varepsilon\|_
{L^\infty(\{x:\dist(x,U)<R\})}
\le
\|\nabla\eta_\varepsilon\|_{L^\infty(\R^n)}
\|u_j-u\|_{L^1(W)}
\to 0
\end{equation}
as $j\to\infty$.
For $x,y\in U$ with $|x-y|<R$, the segment joining $x$ and $y$ is contained in
$\{z:\dist(z,U)<R\}$. Thus
\[
 \frac{|(u_{j,\varepsilon}-u_\varepsilon)(x)
 -(u_{j,\varepsilon}-u_\varepsilon)(y)|}{|x-y|}
 \le
 \|\nabla u_{j,\varepsilon}-\nabla u_\varepsilon\|_
 {L^\infty(\{z:\dist(z,U)<R\})}.
\]
Using \eqref{eq:kernel-mass} and \eqref{nabla-0}, we conclude that
\begin{align*}
&\left|
 \int_U\int_{\{y\in U:|x-y|<R\}}
 \frac{|u_{j,\varepsilon}(x)-u_{j,\varepsilon}(y)|}{|x-y|}
 \rho_{\lambda_j}(x-y)\,dy\,dx\right.\\
&\qquad\qquad\left.
 -
 \int_U\int_{\{y\in U:|x-y|<R\}}
 \frac{|u_\varepsilon(x)-u_\varepsilon(y)|}{|x-y|}
 \rho_{\lambda_j}(x-y)\,dy\,dx
\right|\\
\leq &\mathcal L^n(U) \|\nabla u_{j,\varepsilon}-\nabla u_\varepsilon\|_
{L^\infty(\{z:\dist(z,U)<R\})}\to 0\quad\text{as }j\to\infty.
\end{align*}
Applying Lemma~\ref{lem:smoothtrunc} to the  smooth function
$u_\varepsilon$ gives
\begin{align*}
&\lim_{j\to\infty}
\int_U\int_{\{y\in U:|x-y|<R\}}
\frac{|u_{j,\varepsilon}(x)-u_{j,\varepsilon}(y)|}{|x-y|}
\rho_{\lambda_j}(x-y)\,dy\,dx\\
=&\lim_{j\to\infty}
\int_U\int_{\{y\in U:|x-y|<R\}}
\frac{|u_{\varepsilon}(x)-u_{\varepsilon}(y)|}{|x-y|}
\rho_{\lambda_j}(x-y)\,dy\,dx=
K_{1,n}\int_U|\nabla u_\varepsilon|\,dx.
\end{align*}

Combining this with \eqref{eq:seq-moll-energy} and \eqref{limit-le-L} yields
\[
 K_{1,n}\int_U|\nabla u_\varepsilon|\,dx\le L
 \qquad\text{for every }0<\varepsilon<d/8.
\]
Since $\widetilde u\in L^1(\R^n)$, the standard approximation property of mollifiers gives
\[
u_\varepsilon\rightarrow u
\quad\text{in }L^1(U)
\]
as $\varepsilon\to0^+$. By the lower semicontinuity of the total variation (see \cite[Theorem 5.2]{EvansGariepy}), we have
\[
 K_{1,n}|Du|(U)
 \le
 K_{1,n}\liminf_{\varepsilon\to0^+}\int_U|\nabla u_\varepsilon|\,dx
 \le L.
\]
This gives \eqref{eq:local-liminf} and completes the proof.
\end{proof}

We are ready to prove Theorem~\ref{thm:gamma} and Corollary~\ref{cor:characterization}.

\begin{proof}[Proof of Theorem~\ref{thm:gamma}]
(i)  Suppose that $u_j\to u$ in $L^1(\Omega)$ and set
\[
 L:=\liminf_{j\to\infty}\mathcal F_{\lambda_j}(u_j).
\]
If $L=+\infty$, there is nothing to prove. If $L<+\infty$, it follows from Lemma~\ref{lem:mollify} that for every bounded open set $U\Subset\Omega$,
\[
 K_{1,n}|Du|(U)\le L.
\]
 In particular,
$u\in BV_{\mathrm{loc}}(\Omega)$. Let
$\phi\in C_c^1(\Omega;\R^n)$ with $|\phi|\le1$, and choose a bounded open
$U\Subset\Omega$ with $\supp\phi\subset U$. Then
\[
 \left|\int_\Omega u\,\operatorname{div}\phi\,dx\right|
 \le |Du|(U)\le\frac{L}{K_{1,n}}.
\]
Recall the definition of $|Du|(\Omega)$ in \eqref{eq:BVdual}. Taking the supremum over such $\phi$ and using $u\in L^1(\Omega)$, we obtain
$u\in BV(\Omega)$ and
\[
 K_{1,n}|Du|(\Omega)\le L.
\]

(ii) If $u\in L^1(\Omega)\setminus BV(\Omega)$, $\mathcal F(u)=\infty$ and there is nothing to prove by taking   $u_j=u$. If $u\in BV(\Omega)$, by taking $u_j=u$, we deduce from Theorem~\ref{thm:main} that
\[
 \lim_{j\to\infty}\mathcal F_{\lambda_j}(u_j)
 =K_{1,n}|Du|(\Omega)=\mathcal F(u).
\]
This completes the proof.
\end{proof}

\begin{proof}[Proof of Corollary~\ref{cor:characterization}]
The implication $u\in BV(\Omega)\Rightarrow$ finiteness of the lower limit follows from Theorem~\ref{thm:main}. We now prove the converse. Assume $u\in L^1(\Omega)$ and set
\begin{equation*}\label{eq:Ldef}
	L:=\liminf_{\lambda\to0^+}\E_{\lambda,\tau}^{\rho}(u,\Omega)<\infty.
\end{equation*}
Choose $\lambda_j\to0^+$ as $j\to\infty$ such that
$\E_{\lambda_j,\tau}^{\rho}(u,\Omega)\to L$. By taking $u_j=u$ in part (i) of Theorem~\ref{thm:gamma}, we obtained that $K_{1, n}|Du|(\Omega)\leq L$ and $u\in BV(\Omega)$. Finally,  \eqref{lower-to-limit} follows from Theorem~\ref{thm:main}.
\end{proof}

\section{Two compactness results}\label{sec:compactness}
We first prove local compactness on an arbitrary open set by localizing the functions
away from the boundary and using the classical compactness theorem on bounded Lipschitz domains (cf. \cite[Theorem~1.2]{PonceJEMS2004}). The
global result on bounded open sets then follows from uniform integrability and  Lemma~\ref{lem:local-to-global}.

For $v\in L^1(\R^n)$, set
\[
 \mathcal B_\lambda(v):=
 \int_{\R^n}\int_{\R^n}
 \frac{|v(x)-v(y)|}{|x-y|}\rho_\lambda(x-y)\,dy\,dx.
\]

\begin{lem}\label{lem:cutoff}
Let $\eta\in C_c^\infty(\Omega)$ satisfy $0\le\eta\le1$. Choose $\delta>0$ such
that
\begin{equation}\label{eq:delta-cutoff}
 \delta<\tau\inf_{x\in\supp\eta}d_\Omega(x).
\end{equation}
For $u\in L^1(\Omega)$, let $\widetilde u$ be its zero extension to $\R^n$ and
set $v:=\eta\widetilde u$. Then
\begin{equation}\label{eq:cutoff-estimate}
 \mathcal B_\lambda(v)
 \le
 \E_{\lambda,\tau}^{\rho}(u,\Omega)
 +
 \left(\|\nabla\eta\|_{L^\infty(\R^n)}+\frac{2}{\delta}\right)
 \|u\|_{L^1(\Omega)}.
\end{equation}
\end{lem}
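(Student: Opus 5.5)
Write $S:=\supp\eta$. The plan is to split $\mathbb R^n\times\mathbb R^n$ according to whether $|x-y|<\delta$ or $|x-y|\ge\delta$. On the near region we compare $v$ with $u$ through the product rule. On the far region we only use the $L^1$ bound. By symmetry of the integrand, the part of $\mathcal B_\lambda(v)$ with $|x-y|<\delta$ may be written as twice the integral over pairs with $|x-y|<\delta$ and $x\in S$; pairs where neither point lies in $S$ contribute nothing. To avoid a factor $2$ in front of $\E$, it is cleaner to use the splitting
\[
v(x)-v(y)=\eta(x)\bigl(\widetilde u(x)-\widetilde u(y)\bigr)+\widetilde u(y)\bigl(\eta(x)-\eta(y)\bigr).
\]
We keep the roles of $x$ and $y$ asymmetric, choosing for each pair the version in which the point carrying the factor $\eta$ lies in $S$.

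\textbf{Near region.} Take $|x-y|<\delta$ with $x\in S$. By \eqref{eq:delta-cutoff}, $|x-y|<\tau d_\Omega(x)$, so $y\in B(x,\tau d_\Omega(x))\subset\Omega$ and $\widetilde u=u$ at both points. Using $0\le\eta\le1$,
\[
|v(x)-v(y)|\le|u(x)-u(y)|+|u(y)|\,\|\nabla\eta\|_\infty|x-y|.
\]
The first term, integrated against $\rho_\lambda(x-y)/|x-y|$ over $x\in\Omega$ and $y\in B(x,\tau d_\Omega(x))$, is at most $\E_{\lambda,\tau}^{\rho}(u,\Omega)$. The second term gives $\|\nabla\eta\|_\infty\int\!\int|\widetilde u(y)|\rho_\lambda(x-y)\,dx\,dy\le\|\nabla\eta\|_\infty\|u\|_{L^1}$ by \eqref{eq:kernel-mass}.

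\textbf{The obstacle: pairs with $y\in S$, $x\notin S$, $|x-y|<\delta$.} Here $v(x)=0$, so the integrand is $\eta(y)|u(y)|$, and the estimate above does not directly control it without double counting. I would handle this by writing
\[
\eta(y)|u(y)|=|\eta(y)-\eta(x)|\,|u(y)|\le\|\nabla\eta\|_\infty|x-y|\,|u(y)|,
\]
since $\eta(x)=0$. The contribution is then again at most $\|\nabla\eta\|_\infty\|u\|_{L^1}$.

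\textbf{Bookkeeping to match the constant.} Doing both cases naively costs $2\|\nabla\eta\|_\infty$, whereas the target constant is $\|\nabla\eta\|_\infty+2/\delta$. To reach it, I would partition the near region into two disjoint pieces. On the piece where $x\in S$, use the first bound, where the gradient term carries the weight $|\widetilde u(y)|$. On the piece where $x\notin S$ but $y\in S$, use the second bound, which carries the weight $|u(y)|$. The two pieces are disjoint, so together their gradient terms are bounded by $\|\nabla\eta\|_\infty\int_{\mathbb R^n}|\widetilde u(y)|\int_{\mathbb R^n}\rho_\lambda(x-y)\,dx\,dy=\|\nabla\eta\|_\infty\|u\|_{L^1}$. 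The $|u(x)-u(y)|$ terms appear only on the first piece, where they are bounded by $\E$.

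\textbf{Far region.} For $|x-y|\ge\delta$, bound $|v(x)-v(y)|\le|v(x)|+|v(y)|$ and $1/|x-y|\le1/\delta$. By symmetry and \eqref{eq:kernel-mass}, this part is at most $\frac{2}{\delta}\|v\|_{L^1}\le\frac2\delta\|u\|_{L^1(\Omega)}$. Adding the near and far contributions gives \eqref{eq:cutoff-estimate}.
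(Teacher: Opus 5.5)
Your proof is correct and is essentially the paper's argument. It uses the same asymmetric splitting $v(x)-v(y)=\eta(x)\bigl(\widetilde u(x)-\widetilde u(y)\bigr)+\widetilde u(y)\bigl(\eta(x)-\eta(y)\bigr)$, the same cut at $|x-y|=\delta$ with \eqref{eq:delta-cutoff} placing $y$ in $B(x,\tau d_\Omega(x))$, and the same mass bounds; the only difference is that the paper cuts only the $\eta(x)$-term at $\delta$ and bounds the gradient term over all of $\R^n\times\R^n$, whereas you cut first and use $|v(x)|+|v(y)|$ on the far region.

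Your ``obstacle'' needs no separate case analysis. The pointwise inequality $|v(x)-v(y)|\le\eta(x)|\widetilde u(x)-\widetilde u(y)|+|\widetilde u(y)|\,|\eta(x)-\eta(y)|$ holds for every pair, and its first term simply vanishes when $x\notin\supp\eta$.
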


\begin{proof}
Using
\[
 |\eta(x)\widetilde u(x)-\eta(y)\widetilde u(y)|
 \le
 \eta(x)|\widetilde u(x)-\widetilde u(y)|
 +|\widetilde u(y)|\,|\eta(x)-\eta(y)|,
\]
we write $\mathcal B_\lambda(v)\le I_1+I_2$. Since $\rho_\lambda$ satisfies \eqref{eq:kernel-mass} and
$$|\eta(x)-\eta(y)|\le\|\nabla\eta\|_{L^\infty(\mathbb R^n)}|x-y|,$$
 we deduce that
\begin{align}\label{sec5-I2}
 I_2&=\int_{\R^n}\int_{\R^n}
 \frac{|\widetilde u(y)|\,|\eta(x)-\eta(y)|}{|x-y|}\rho_\lambda(x-y)\,dy\,dx\notag\\
 &\le\|\nabla\eta\|_{L^\infty(\mathbb R^n)}\|\widetilde u\|_{L^1(\mathbb R^n)}=\|\nabla\eta\|_{L^\infty(\mathbb R^n)}\|u\|_{L^1(\Omega)}.
\end{align}

For the estimate of $I_1$, we split $\mathbb R^n\times \mathbb R^n$ into the regions $\{(x, y)\in \mathbb R^n\times \mathbb R^n: |x-y|<\delta\}$ and $\{(x, y)\in \mathbb R^n\times \mathbb R^n: |x-y|\ge\delta\}$. On the
first region, $\eta(x)\ne0$ implies $x\in\supp\eta$, and then
\eqref{eq:delta-cutoff} gives
\[
 |x-y|<\delta<\tau d_\Omega(x).
\]
In particular, if $\eta(x)\not=0$, then $y\in\Omega$ with $|x-y|\leq \tau d_\Omega(x)$. Hence
\[
 \iint_{\{(x, y)\in \mathbb R^n\times \mathbb R^n: |x-y|<\delta\}}
 \eta(x)\frac{|\widetilde u(x)-\widetilde u(y)|}{|x-y|}
 \rho_\lambda(x-y)\,dy\,dx
 \le \E_{\lambda,\tau}^{\rho}(u,\Omega).
\]
On the second region, since $\rho_\lambda$ satisfies \eqref{eq:kernel-mass} and $0\leq \eta\leq 1$, we obtain
\begin{align*}
&\iint_{\{(x, y)\in \mathbb R^n\times \mathbb R^n: |x-y|\ge\delta\}}
 \eta(x)\frac{|\widetilde u(x)-\widetilde u(y)|}{|x-y|}
 \rho_\lambda(x-y)\,dy\,dx\\
&\le\frac1\delta
 \iint_{\mathbb R^n\times\mathbb R^n} \eta(x)\bigl(|\widetilde u(x)|+|\widetilde u(y)|\bigr)
 \rho_\lambda(x-y)\,dy\,dx
 \le\frac{2}{\delta}\|u\|_{L^1(\Omega)}.
\end{align*}
Consequently, 
$$I_1=\int_{\R^n}\int_{\R^n}
\eta(x)\frac{|\widetilde u(x)-\widetilde u(y)|}{|x-y|}\rho_\lambda(x-y)\,dy\,dx \leq \E_{\lambda,\tau}^{\rho}(u,\Omega)+ \frac{2}{\delta}\|u\|_{L^1(\Omega)}.$$
Combining this with \eqref{sec5-I2} proves \eqref{eq:cutoff-estimate}, and hence the proof is complete.
\end{proof}

\begin{proof}[Proof of Theorem~\ref{thm:compactness-local}]
Fix a bounded open set $U\Subset\Omega$. Choose
$\eta\in C_c^\infty(\Omega)$ such that $0\le\eta\le1$ and $\eta=1$ on a
neighborhood of $\overline U$. By compactness of $\supp\eta$ in $\Omega$, a
number $\delta>0$ satisfying \eqref{eq:delta-cutoff} exists. Let
$\widetilde u_j$ be the zero extension of $u_j$ and set
\[
 v_j:=\eta\widetilde u_j.
\]
By Lemma~\ref{lem:cutoff} and \eqref{eq:compactness-assumption}, we have
\[
 \sup_{j\geq 1}\left(
 \|v_j\|_{L^1(\R^n)}+\mathcal B_{\lambda_j}(v_j)
 \right)<\infty.
\]
Choose a bounded ball $B$ containing $\supp\eta$. Since $n\ge2$, the sequence
$\rho_{\lambda_j}$ satisfies the hypotheses of
\cite[Theorem~1.2]{PonceJEMS2004}. Because the bounded ball $B$ is a Lipschitz domain, it follows from \cite[Theorem~1.2]{PonceJEMS2004} that
$(v_j)$ is relatively compact in $L^1(B)$. Since $v_j=u_j$
on $U$, the sequence $(u_j)$ is relatively compact in $L^1(U)$.

Let $(U_m)$ be a bounded exhaustion of $\Omega$ with
$U_m\Subset U_{m+1}\Subset\Omega$. A diagonal argument gives a subsequence $(u_{j_k})$ and $u\in L^1_{\mathrm{loc}}(\Omega)$ such that
\[
 u_{j_k}\to u\qquad\text{in }L^1_{\mathrm{loc}}(\Omega).
\]
Passing to a further subsequence, we may assume $u_{j_k}\to u$ almost everywhere in
$\Omega$. By Fatou's lemma and \eqref{eq:compactness-assumption}, we have
\[
 \|u\|_{L^1(\Omega)}\le\liminf_{j_k\to\infty}\|u_{j_k}\|_{L^1(\Omega)}<\infty.
\]

Set
\[
 L:=\liminf_{j_k\to\infty}\E_{\lambda_{j_k},\tau}^{\rho}(u_{j_k},\Omega).
\]
It follows from Lemma~\ref{lem:mollify}  that for every bounded open $V\Subset\Omega$,
\[
 K_{1,n}|Du|(V)\le L.
\]
As in the proof of Theorem~\ref{thm:gamma}, the definition of $|Du|(\Omega)$ in \eqref{eq:BVdual}  yields $u\in BV(\Omega)$ and
\[
 K_{1,n}|Du|(\Omega)\le L.
\]
This gives \eqref{eq:compactness-liminf} and completes the proof.
\end{proof}

\begin{proof}[Proof of Theorem~\ref{thm:compactness}]
By Theorem~\ref{thm:compactness-local}, after passing to a subsequence $(u_{j_k})$, there is
$u\in BV(\Omega)$ such that
\[
 u_{j_k}\to u\qquad\text{in }L^1_{\mathrm{loc}}(\Omega),
\]
and \eqref{eq:compactness-liminf} holds. Since $\Omega$ is bounded and $(u_j)$
is uniformly integrable in $L^1(\Omega)$, Lemma~\ref{lem:local-to-global} implies
\[
 u_{j_k}\to u\qquad\text{in }L^1(\Omega),
\]
which completes the proof.
\end{proof}

\section{Examples}\label{sec:counterexample}
In this section, we present the following three counterexamples.

\subsection{Failure of the classical BBM formula on arbitrary open sets}\label{subsec:slit-counterexample}
We show that the standard fractional Sobolev energy in
\eqref{eq:classical-bbm} does not yield the BBM formula on arbitrary open sets.

Let $\{\rho_\lambda\}_{\lambda>0}$ be any family of radial mollifiers satisfying
\eqref{eq:kernel-mass} and \eqref{eq:kernel-tail}. For every $n\ge2$, the
following slit-domain construction gives a bounded domain $\Omega\subset\R^n$
and a function $u\in BV(\Omega)$ for which
\begin{equation}\label{eq:counter}
	\lim_{\lambda\to0^+}
	\int_\Omega\int_\Omega
	\frac{|u(x)-u(y)|}{|x-y|}\rho_\lambda(x-y)\,dx\,dy
	\ne K_{1,n}|Du|(\Omega).
\end{equation}

Let
\[
Q:=(-1,1)^n
\]
and let $S$ be the half-hyperplane  slit in $Q$ defined by
\[
S:=\{x=(x_1,\ldots,x_n)\in Q:x_1=0,\ x_2\le0\}.
\]
Set $\Omega:=Q\setminus S$. Clearly, $\Omega$ is a  bounded domain in $\mathbb R^n$.

Define 
\[
E:=\{x=(x_1,\ldots,x_n)\in \mathbb R^n: x_1>0\}\quad \text{and} \quad u(x):=\mathbf1_{E}(x),\  x\in\mathbb R^n.
\]
Then $E$ has locally finite perimeter in $\mathbb R^n$. Moreover,  $|Du|=|D\mathbf1_{E}|$ coincides with the $(n-1)$-dimensional Hausdorff measure restricted on the reduced boundary $\partial^* E$; see \cite[Theorem 5.15]{EvansGariepy}. 

Because $\partial^* E=\{x=(x_1,\ldots,x_n)\in \mathbb R^n: x_1=0\}$, we have
$$\partial^* E\cap \Omega=\{x\in Q:x_1=0,\ x_2>0\}\quad \text{and}\quad \partial^* E\cap Q=\{x\in Q:x_1=0\}.$$
Therefore,
\begin{equation}\label{eq:DuOmega}
	|Du|(\Omega)=\Hn(\partial^* E\cap \Omega)=2^{n-2}\quad \text{and} \quad  |Du|(Q)=\Hn(\partial^* E\cap Q)=2^{n-1}. 
\end{equation}
Then we know that $u\in BV(\Omega)$ and $u\in BV(Q)$.

Since the slit $S$ has zero $n$-dimensional Lebesgue measure, for every $\lambda>0$, we have
\begin{equation}\label{eq:same-integral}
	\int_\Omega\int_\Omega
	\frac{|u(x)-u(y)|}{|x-y|}\rho_\lambda(x-y)\,dx\,dy
	=\int_Q\int_Q
	\frac{|u(x)-u(y)|}{|x-y|}\rho_\lambda(x-y)\,dx\,dy.
\end{equation}
Because  $Q$ is a Lipschitz domain, D\'avila's theorem \cite[Theorem 1]{Davila} yields
\[
\lim_{\lambda\to0^+}
\int_Q\int_Q
\frac{|u(x)-u(y)|}{|x-y|}\rho_\lambda(x-y)\,dx\,dy
=K_{1,n}|Du|(Q)=K_{1,n}2^{n-1}.
\]
Combining this with \eqref{eq:DuOmega} and \eqref{eq:same-integral}, we obtain
\[
\lim_{\lambda\to0^+}
\int_\Omega\int_\Omega
\frac{|u(x)-u(y)|}{|x-y|}\rho_\lambda(x-y)\,dx\,dy
=2K_{1,n}|Du|(\Omega).
\]
Thus \eqref{eq:counter} holds, and the  BBM limit of standard fractional Sobolev energy in \eqref{eq:classical-bbm} disagrees with $K_{1,n}|Du|(\Omega)$.

\subsection{Failure of global compactness on unbounded domains}\label{subsec:unbounded-counterexample}
The boundedness of $\Omega$ in the global compactness theorem, Theorem~\ref{thm:compactness}, cannot be omitted.
Take $\Omega=\R^n$ and let $0\ne v\in C_c^\infty(\R^n)$. Choose a sequence
$\{a_j\}_{j=1}^\infty\subset\R^n$ with $|a_j-a_k|$ larger than twice the diameter of $\supp v$ when
$j\ne k$, and set
\[
 u_j(x):=v(x-a_j).
\]
By translation invariance, $(u_j)$ is uniformly integrable in $L^1(\R^n)$ and
$\|u_j\|_{L^1(\mathbb R^n)}=\|v\|_{L^1(\mathbb R^n)}$. Moreover, for every sequence $\lambda_j\to 0^+$ as $j\to \infty$,  
$$\E_{\lambda_j,\tau}^{\rho}(u_j,\R^n)=\E_{\lambda_j,\tau}^{\rho}(v,\R^n).$$
Then it follows from Proposition~\ref{prop:Ele} that
$$\sup_{j\geq 1}\E_{\lambda_j,\tau}^{\rho}(u_j,\R^n)=\sup_{j\geq 1}\E_{\lambda_j,\tau}^{\rho}(v,\R^n)
\le K_{1,n}|Dv|(\R^n)<\infty.$$
Combining this with $\|u_j\|_{L^1(\mathbb R^n)}=\|v\|_{L^1(\mathbb R^n)}$ yields that $(u_j)$ satisfies \eqref{eq:compactness-assumption}.
On the other hand, the supports of $(u_j)$ are pairwise disjoint, so
\[
 \|u_j-u_k\|_{L^1(\R^n)}=2\|v\|_{L^1(\R^n)}
 \qquad \text{whenever}\  j\ne k.
\]
Thus no subsequence of $(u_j)$ can converge strongly in $L^1(\R^n)$.

\subsection{Failure of global compactness without uniform integrability}\label{subsec:ui-counterexample}
The uniform integrability assumption in Theorem~\ref{thm:compactness} cannot in
general be omitted, even if $\Omega$ is bounded and connected. For brevity, we present an explicit construction in $\mathbb{R}^2$, which can be readily generalized to higher dimensions.

Let 
\[
 Q:=(0,1)^2.
\]
For $k\geq 1$, set 
$$I_k=(2^{-k}-2^{-4k}, 2^{-k}+2^{-4k})\quad\text{and} \quad R_k=I_k\times(-1, 1).$$
Then the sets $R_k$ are pairwise disjoint. Define
$$\Omega=Q\cup \bigcup_{k\ge1} R_k.$$
Then $\Omega$ is bounded and connected. Let 
$$E_k=I_k\times (-1, -1/2).$$
Since the only part of the reduced boundary of $E_k$ lying inside $\Omega$ is $I_k\times \{-1/2\}$, we have
$$P(E_k;\Omega)=\mathcal H^{1}(I_k)=2^{-4k+1}.$$
Moreover, 
$$\mathcal L^2(E_k)=\frac{1}{2} \mathcal H^{1}(I_k)=2^{-4k}.$$
Now define
\[
 u_k:=\frac{\mathbf1_{E_k}}{\mathcal L^2(E_k)}.
\]
Then
\[
 \|u_k\|_{L^1(\Omega)}=1,
 \qquad
 |Du_k|(\Omega)=\frac{P(E_k;\Omega)}{\mathcal L^2(E_k)} =2.
\]
For every sequence $\lambda_k\to 0^+$ as $k\to \infty$,  
by Proposition~\ref{prop:Ele},   we have
\[
 \E_{\lambda_k,\tau}^{\rho}(u_k,\Omega)
 \le K_{1,2}|Du_k|(\Omega)\le 2K_{1,2}.
\]
Thus $(u_k)$ satisfies \eqref{eq:compactness-assumption}.  However, the sets $E_k\subset R_k$ are pairwise
disjoint, and hence
\[
 \|u_j-u_k\|_{L^1(\Omega)}=2
 \qquad \text{whenever}\  j\ne k.
\]
Therefore, $(u_k)$ has no strongly convergent subsequence in $L^1(\Omega)$.

This construction also shows exactly where uniform integrability fails:
although $\mathcal L^2(E_k)\to 0$ as $k\to\infty$,
\[
 \int_{E_k}|u_k|\,dx=1
 \quad\text{for every }k\geq 1.
\]

\vspace{1cm}

\noindent\textbf{Acknowledgments.} 
Jiang Li was partly supported by NSFC under Grant No. 12571081, Grant No. 12371071 and the Key Project of NSF of Hunan Province under Grant No. 2026JJ30002.  Zhuang Wang was partly supported by NSF of Hunan Province under Grant No. 2024JJ6299, the Scientific Research Fund of Hunan Provincial Education Department  under Project No. 25B0095, and  NSFC under Grant No. 12101226.

\end{document}